\documentclass[a4paper,10pt]{amsart}

% packages:
\usepackage[english]{babel}
\usepackage[utf8]{inputenc}

\usepackage{amssymb}
\usepackage{amsmath}
\usepackage{amsthm}
\usepackage{bbm}

\usepackage{enumerate}
\usepackage{tikz}
\usepackage{marginnote}

% shortcuts for symbols:

\newcommand{\bbN}{\mathbb{N}}

\newcommand{\bbR}{\mathbb{R}}

\newcommand{\bbT}{\mathbb{T}}

\newcommand{\calK}{\mathcal{K}}
\newcommand{\calL}{\mathcal{L}}

\DeclareMathOperator{\id}{id}
\DeclareMathOperator{\one}{\mathbbm{1}}
\DeclareMathOperator{\re}{Re}

\newcommand{\argument}{\mathord{\,\cdot\,}}
\newcommand{\dx}{\;\mathrm{d}}

\DeclareMathOperator{\linSpan}{span}

\newcommand{\norm}[1]{\left\lVert #1 \right\rVert}
\newcommand{\modulus}[1]{\left\lvert #1 \right\rvert}
\DeclareMathOperator{\dom}{dom}
\newcommand{\loc}{\operatorname{loc}}

% spectral theory, etc:
\newcommand{\spec}{\sigma}
\newcommand{\Res}{\mathcal{R}}

\newcommand{\normEss}[1]{\norm{#1}_{\operatorname{ess}}}

\DeclareMathOperator{\spb}{s}
\DeclareMathOperator{\gbd}{\omega}
\DeclareMathOperator{\Graph}{G}

% further commands:
% ...

% sideremarks

% theorems, etc.:
\theoremstyle{definition}
\newtheorem{definition}{Definition}[section]

\newtheorem{example}[definition]{Example}
\newtheorem{examples}[definition]{Examples}

\newtheorem{proposition_and_definition}[definition]{Proposition and Definition}
\newtheorem*{further_references}{Further References}

\theoremstyle{plain}
\newtheorem{proposition}[definition]{Proposition}
\newtheorem{lemma}[definition]{Lemma}
\newtheorem{theorem}[definition]{Theorem}
\newtheorem{corollary}[definition]{Corollary}

% counters:
\numberwithin{equation}{section}

\begin{document}

\title[Positive irreducible semigroups]{Positive irreducible semigroups and their long-time behaviour}
\dedicatory{Dedicated to Rainer Nagel on the occasion of his 80th birthday.}
\author{Wolfgang Arendt}
\address{Wolfgang Arendt, Institut f\"ur Angewandte Analysis, Universität Ulm, D-89069 Ulm, Germany}
\email{wolfgang.arendt@uni-ulm.de}
\author{Jochen Gl\"uck}
\address{Jochen Gl\"uck, Fakultät f\"ur Informatik und Mathematik, Universität Passau, Innstraße 33, D-94032 Passau, Germany}
\email{jochen.glueck@uni-passau.de}
\subjclass[2020]{47D06, 47B65, 35K15}
\keywords{Convergence to equilibrium, asymptotic compactness, form methods, Schr\"odinger operator, eventual positivity}
\date{\today}
\begin{abstract}
	The notion~\emph{Perron--Frobenius theory} usually refers to the interaction between three properties of operator semigroups: positivity, spectrum and long-time behaviour. These interactions gives rise to a profound theory with plenty of applications. 
	
	By a brief walk-through of the field and with many examples, we highlight two aspects of the subject, both related to the long-time behaviour of semigroups: (i) The classical question how positivity of a semigroup can be used to prove convergence to an equilibrium as $t \to \infty$. (ii) The more recent phenomenon that positivity itself sometimes occurs only for large $t$, while being absent for smaller times.
\end{abstract}

\maketitle

\section{Introduction}

This article is a brief journey through the world of positive operator semigroups, with a focus on irreducibility and the behaviour for large time. We do not give a comprehensive survey, nor an elementary introduction to the topic; instead, we intend to take the reader on a sight seeing trip to a few highlights of the theory, illustrated by a large variety of examples. We start off with a few basics about the notion of \emph{positivity} (Section~\ref{section:setting-the-stage}), visit a number of \emph{convergence theorems} for $t \to \infty$ -- some of them quite classic, others of more recent descent -- (Section~\ref{section:convergence-to-equilibrium}), make an excursion to \emph{form methods} (Section~\ref{section:forms}), and finish our journey with an outlook to semigroups that are not positive but merely \emph{eventually positive} (Section~\ref{section:ev-pos}). Each theoretical section is accompanied by a selection of applications in the subsequent section. If our readers enjoy the trip, they can find plenty of pointers to related results in paragraphs called \emph{Further References} at the end of most subsections.

\section{Setting the stage: semigroups and positivity}
\label{section:setting-the-stage}

\subsection{Function spaces and Banach lattices}

Throughout we consider operator semigroups on functions spaces such as $L^p$ and $C(K)$, or on Banach spaces with a similar order structure. A common framework for those spaces is provided by the notion \emph{Banach lattice}. Throughout, we assume that $E$ is a Banach lattice over the real field with positive cone $E_+ = \{f \in E: \, f \ge 0\}$. The vectors in $E_+$ are called \emph{positive}. 

The following two notational conventions for vectors $f \in E$ are very convenient: We write $f > 0$ if $f \ge 0$ but $f \not= 0$, and we write $f \gg 0$ if $f$ is a \emph{quasi-interior point} of the positive cone, which means that $\lim_{n \to \infty} u \land (nf) = u$ for each $u \in E_+$. The following examples of Banach lattices occur repeatedly throughout the article:

\begin{examples}[Two important Banach lattices]
	\begin{enumerate}[(a)]
		\item Let $1 \le p \le \infty$ and consider a $\sigma$-finite measure space $(\Omega,\mu)$. Then $E := L^p(\Omega,\mu)$ is a Banach lattice, where a function $f \in E$ is positive iff $f(\omega) \ge 0$ for almost all $\omega \in \Omega$. For a positive function $f$ we have $f > 0$ iff $f$ is not almost everywhere equal to $0$.
			
		In order to understand which elements of $E_+$ are quasi-interior points, one has to distinguish the cases $p < \infty$ and $p = \infty$: If $p < \infty$, then a vector $f \in E_+$ is a quasi-interior point iff $f(\omega) > 0$ for almost all $\omega$. If $p = \infty$, then a vector $f \in E_+$ is a quasi-interior point iff $f \ge \delta \one_\Omega$ for a ($f$-dependent) number $\delta > 0$.

		\item Let $K$ be a compact space -- for instance a bounded and closed subset of $\bbR^d$ -- and let $E = C(K)$ denote the space of real-valued continuous functions on $K$ (endowed with the supremum norm). Then a function $f \in E$ is positive iff $f(x) \ge 0$ for all $x \in K$. A positive function $f$ is a quasi-interior point iff $f \ge \delta \one_K$ for a number $\delta > 0$; by compactness, this is equivalent to the condition $f(x) > 0$ for each $x \in K$.
	\end{enumerate}
\end{examples}

We point out that the cone $E_+$ has empty interior in $E$ for many important Banach lattices -- for instance if $E$ is an infinite-dimensional $L^p$-space with $1 \le p < \infty$.

It is important to note that the dual space $E'$ of a Banach lattice is itself a Banach lattice, whose positive cone $E'_+$ is the set of positive functionals on $E$; here, a functional $\varphi \in E'$ is called positive if $\langle \varphi, f\rangle \ge 0$ for all $f \in E_+$. Finally, we note that a functional $\varphi \in E'$ is called \emph{strictly positive} if $\langle \varphi, f\rangle > 0$ whenever $0 < f \in E$.

\begin{further_references}
	Details about Banach lattices can, for instance, be found in the classical monographs \cite{Schaefer1974, Meyer-Nieberg1991} and in the introductory book \cite{Zaanen1997}.
\end{further_references}

\subsection{Positive semigroups}

A linear operator $T: E \to E$ is called \emph{positive}, which we denote by $T \ge 0$, if $Tf \ge 0$ whenever $f \ge 0$. A positive operator is automatically continuous. 

Now we come to the heart of the matter: operator semigroups. We use the notation $\calL(E)$ for the space of continuous linear operators on our Banach lattice $E$.

\begin{definition}[Semigroups]
	\begin{enumerate}[(a)]
		\item A \emph{one-parameter operator semigroup}, for short a \emph{semigroup}, on $E$ is a mapping $S: (0,\infty) \to \calL(E)$ that satisfies the semigroup law $S(s+t) = S(t)S(s)$ for all $s,t \in (0,\infty)$ and that is locally bounded at $0$, by which we mean $\sup_{t \in (0,1]} \norm{S(t)} < \infty$.
		
		\item A semigroup $S$ is called \emph{strongly continuous} if the orbit map $(0,\infty) \ni t \mapsto S(t)f \in E$ is continuous for each $f \in E$; and $S$ is called a \emph{$C_0$-semigroup} if $S(t)f \to f$ as $t \downarrow 0$ for each $f \in E$.
		
		\item A semigroup $S$ is called \emph{bounded} if $\sup_{t \in (0,\infty)} \norm{S(t)} < \infty$ and it is called \emph{positive} if $S(t)$ is a positive operator for each $t \in (0,\infty)$.
	\end{enumerate}
\end{definition}

It is easy to see that every semigroup $S$ is \emph{exponentially bounded}, i.e., we have $\norm{S(t)} \le M e^{\omega t}$ for every $t \in (0,\infty)$ and constants $M \ge 0$ and $\omega \in \bbR$. The number
\begin{align*}
	\gbd(S) := \inf \{w: \, \exists M \ge 0 \text{ such as } \norm{S(t)} \le Me^{\omega t} \text{ for all } t \in (0,\infty)\} \in [-\infty,\infty)
\end{align*}
is called the \emph{growth bound} of the semigroup $S$.

If $S$ is a $C_0$-semigroup, it has a \emph{generator} $A$, which is defined as the unbounded linear operator $A: E \supseteq \dom(A) \to E$ that acts as $Af := \lim_{t \downarrow 0} \frac{S(t)f - f}{t}$ on the domain
\begin{align*}
	\dom(A) := \{f \in E: \, \lim_{t \downarrow 0} \frac{S(t)f - f}{t} \text{ exists in } E\}.
\end{align*}
For semigroups that are not $C_0$ but have other types of time regularity, one can also give meaning to the notion \emph{generator}; however, instead of discussing various such definitions here, we are going to give references to the literature in those examples where we need generators of semigroups that are not $C_0$.

\begin{further_references}
	\begin{enumerate}[(a)]
		\item For a detailed account of the theory of positive $C_0$-semigroups on Banach lattices, we refer to the classical treatise \cite{Arendt1986} and to the recent book \cite{Batkai2017}.
				
		\item For the related subject of positive semigroups on ordered Banach spaces (rather than only on Banach lattices) we send to the reader to the classical article \cite{Batty1984}. In particular, positive semigroup on $C^*$-algebras and von Neumann algebras occur frequently in mathematical physics due to their applications in quantum theory; we refer for instance to the monograph \cite{Arveson2003} for more details.
	\end{enumerate}
\end{further_references}

\subsection{Irreducibility}

We focus on irreducible semigroups throughout the article. This is a notion that can be defined in various equivalent ways; here is a definition that is very easy from a terminological point of view.

\begin{definition}[Irreducible semigroups]
	A positive semigroup $S$ on our Banach lattice $E$ is called \emph{irreducible} if, for each $0 < f \in E$ and each $0 < \varphi \in E'$, there exists a time $t \in (0,\infty)$ such that
	\begin{align*}
		\langle \varphi, S_t f \rangle > 0.
	\end{align*}
\end{definition}

From an intuitive point of view irreducibility means that, no matter where in the space the initial value $f$ is located, the semigroup moves $f$ through the entire space, so that at some time it meets the functional $\varphi$. Positive semigroups that are not irreducible are sometimes called \emph{reducible}. The following examples are simple but illuminating.

\begin{examples}[Reducible and irreducible semigroups]
	Fix $p \in [1,\infty)$.
	\begin{enumerate}[(a)]
		\item The \emph{left shift semigroup} $S$ on $L^p(\bbR)$ is not irreducible.
		
		\item The \emph{rotation semigroup} $S$ on $L^p(\bbT)$, where $\bbT$ denotes the complex unit circle, is irreducible.
	\end{enumerate}
\end{examples}

For $C_0$-semigroups, irreducibility can be characterised in terms of the resolvent of the generator, see for instance \cite[Proposition~14.10]{Batkai2017}. We will encounter a further criterion for irreducibility in Proposition~\ref{prop:pos-and-irred-by-forms}.

\begin{further_references}
	A recent treatment of irreducible operator semigroups in a quite general setting can be found in \cite{Gao2014}.
\end{further_references}

\section{Convergence to equilibrium}
\label{section:convergence-to-equilibrium}

In this section we discuss a few sufficient criteria for a positive and irreducible semigroup to converge as time tends to infinity. We have to distinguish between \emph{uniform convergence} -- i.e., convergence in operator norm -- and \emph{strong convergence}. As before, $E$ denotes a Banach lattice. Several examples will be discussed in Sections~\ref{section:applications-i} and~\ref{section:applications-ii}.

\subsection{The limit operator}

We first recall a few general facts about the limit operator of a convergent semigroup. By the \emph{fixed space} of a semigroup $S$ on $E$ we mean the set of all vectors $f \in E$ such that $S(t)f = f$ for all times $t$. The \emph{dual semigroup} of a semigroup $S$ on $E$ is the mapping $S': \, (0,\infty) \ni t \mapsto S(t)' \in \calL(E')$.

\begin{proposition} \label{prop:limit-operator}
	Let $S$ be a semigroup on $E$ and assume that, for each $f \in E$, $S(t)f$ converges to a vector $Pf$ as $t \to \infty$. Then:
	\begin{enumerate}[\upshape (a)]
		\item The semigroup $S$ is bounded; the limit operator $P$ is a continuous linear operator on $E$ and a projection (i.e., $P^2 = P$) that commutes with every operator $S(t)$.
		
		\item The range of $P$ is the fixed space of $S$ and the range of the dual operator $P'$ is the fixed space of the dual semigroup $S'$ on $E'$.
		
		\item If $S$ is positive and irreducible, then $P$ is either $0$ or there exist a quasi-interior point $u \in E_+$ and a strictly positive functional $\varphi \in E'$ such that $P = \varphi \otimes u$ (by which we mean $Pf = \langle \varphi, f\rangle u$ for each $f \in E$).
	\end{enumerate}
\end{proposition}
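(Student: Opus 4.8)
The plan is to dispatch (a) and (b) by soft functional analysis and to concentrate the real work on the Perron--Frobenius structure in (c). For (a): since $S(t)f$ converges for every $f$, each orbit is bounded, so the uniform boundedness principle gives $M := \sup_{t>0}\norm{S(t)} < \infty$ and hence boundedness of $S$; linearity of $P$ is inherited from the limit, and $\norm{Pf} \le M\norm{f}$ shows $P \in \calL(E)$. The semigroup law in the form $S(s)S(t) = S(s+t)$ yields $S(s)Pf = \lim_{t\to\infty} S(s+t)f = Pf$ and likewise $PS(s)f = Pf$, so $P$ commutes with every $S(s)$ and $S(s)$ fixes $\operatorname{range}(P)$; feeding this back into the definition of $P$ gives $P^2 f = \lim_s S(s)(Pf) = Pf$. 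For (b): if $f = Pg$ then $S(t)f = f$ by commutation, while a fixed $f$ satisfies $Pf = \lim_t S(t)f = f$, identifying $\operatorname{range}(P)$ with the fixed space; transposing $S(t)P = P = PS(t)$ gives $S(t)'P' = P' = P'S(t)'$, and conversely a fixed functional $\varphi$ satisfies $\langle P'\varphi, f\rangle = \lim_t \langle \varphi, S(t)f\rangle = \langle \varphi, f\rangle$, so $\varphi \in \operatorname{range}(P')$.

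The substance is in (c). Positivity of $P$ is immediate from closedness of $E_+$. Assume $P \ne 0$. Decomposing a vector with $Pf \ne 0$ into positive and negative parts produces a positive fixed vector $u > 0$; dually, since $P' \ge 0$ and $P' \ne 0$, the same trick on $E'$ produces a positive fixed functional $\varphi > 0$ of $S'$. Two facts, both powered by irreducibility, then do the work. First, $\varphi$ is strictly positive: if $\langle \varphi, v\rangle = 0$ for some $v > 0$, then $\langle \varphi, S(t)v\rangle = \langle \varphi, v\rangle = 0$ for all $t$, contradicting irreducibility. Second, $u$ is a quasi-interior point: the principal ideal $E_u = \{g : \lvert g\rvert \le cu \text{ for some } c \ge 0\}$ is $S$-invariant, because $S(t)u = u$ and positivity give $\lvert S(t)g\rvert \le S(t)\lvert g\rvert \le cu$; hence $\overline{E_u}$ is a closed $S$-invariant ideal, and since a nonzero proper closed invariant ideal $I$ would carry (via a positive functional on the Banach lattice $E/I$) a functional $\psi > 0$ vanishing on all orbits starting inside $I$ — again contradicting irreducibility — we conclude $\overline{E_u} = E$, i.e.\ $u \gg 0$.

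It remains to show $\operatorname{range}(P)$ is one-dimensional, which I would do by comparing $P$ with the rank-one projection $Q := \varphi \otimes u$, normalised so that $\langle \varphi, u\rangle = 1$. Using $Pu = u$ and $P'\varphi = \varphi$ one checks $PQ = QP = Q$, so $R := P - Q$ is again a projection with every $Rf$ a fixed vector satisfying $\langle \varphi, Rf\rangle = 0$. The crux is that a fixed vector $v$ with $\langle \varphi, v\rangle = 0$ must vanish. Here I would first prove that $\lvert v\rvert$ is itself fixed: from $v = S(t)v$ and positivity, $S(t)\lvert v\rvert \ge \lvert v\rvert$, while pairing with the fixed functional gives $\langle \varphi, S(t)\lvert v\rvert - \lvert v\rvert\rangle = 0$, so strict positivity forces $S(t)\lvert v\rvert = \lvert v\rvert$. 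Consequently $v^+$ and $v^-$ are disjoint positive fixed vectors, hence quasi-interior points whenever nonzero; but disjoint quasi-interior points cannot both be nonzero (if $v^+ \gg 0$ then $v^- = \lim_n v^- \wedge (n v^+) = 0$), and the surviving one is ruled out by $\langle \varphi, v^{\pm}\rangle = 0$ with $\varphi$ strictly positive. Thus $v = 0$, whence $R = 0$ and $P = \varphi \otimes u$.

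The main obstacle is exactly this lattice-theoretic core of (c): the two irreducibility-driven facts that positive fixed vectors are quasi-interior and that the modulus of a fixed vector is again fixed. Both are where irreducibility (through the absence of nontrivial closed invariant ideals and through strict positivity of the dual fixed functional) enters essentially; everything else is formal manipulation of the projection $P$ and its transpose.
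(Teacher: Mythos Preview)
Your argument is correct. Parts (a) and (b) match the paper's reasoning (which merely declares them straightforward); the one small omission is that convergence as $t\to\infty$ by itself does not bound the whole orbit $\{S(t)f:t>0\}$ --- you also need the local boundedness at $0$ built into the paper's definition of \emph{semigroup}, together with the semigroup law, to handle intermediate times. This is a trivial fix.

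For (c) the two routes differ in presentation rather than in substance. The paper simply invokes \cite[Proposition~3.11(c)]{Gerlach2019} to conclude that the fixed space of $S$ is either $\{0\}$ or one-dimensional and spanned by a quasi-interior point, and then remarks that irreducibility forces the spanning dual fixed functional to be strictly positive. You instead supply a self-contained argument: extract $u>0$ and $\varphi>0$ from positivity of $P$, upgrade $\varphi$ to strictly positive directly from the defining property of irreducibility, upgrade $u$ to quasi-interior via the closed-invariant-ideal characterisation, and then eliminate any residual fixed vector $v$ with $\langle\varphi,v\rangle=0$ by showing that $\lvert v\rvert$, hence $v^\pm$, are again fixed and therefore cannot be disjoint nonzero quasi-interior points. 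Both proofs rest on the same Perron--Frobenius mechanism; yours makes that mechanism explicit and avoids the external reference, while the paper's version buys brevity at the cost of a citation.
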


Note that $\langle \varphi, u \rangle = 1$ in assertion~(c) above since $P = \varphi \otimes u$ is a projection.

\begin{proof}[Proof of Proposition~\ref{prop:limit-operator}]
	Assertion (b) is straightforward to prove. The boundedness of $S$ in (a) follows from the local boundedness at $0$ together with the uniform boundedness theorem. Obviously, $P$ commutes with every operator in the semigroup $S$. Moreover, it follows from~(b) that $P$ is a projection.
	
	Assertion~(c) can be shown as follows: the irreducibility and \cite[Proposition~3.11(c)]{Gerlach2019} imply that the fixed space of $S$ is either zero, or one-dimensional and spanned by a quasi-interior point of $E_+$. In the latter case, the dual fixed space is one-dimensional, too, and -- again by the irreducibility -- it is thus spanned by a strictly positive functional.
\end{proof}

Of course, all assertions of Proposition~\ref{prop:limit-operator} hold in particular if $S(t)$ converges uniformly as $t \to \infty$. Moreover, we note the following observation about the speed of convergence in the uniform case.

\begin{proposition} \label{prop:exp-conv-is-uniform}
	Let $S$ be a semigroup on $E$ and assume that $S(t)$ converges uniformly to an operator (hence a projection) $P \in \calL(E)$ as $t \to \infty$. Then there exist real numbers $M \ge 0$ and $\delta > 0$ such that
	\begin{align*}
		\norm{S(t) - P} \le Me^{-\delta t}
	\end{align*}
	for all $t \in (0,\infty)$, i.e., the convergence is exponentially fast.
\end{proposition}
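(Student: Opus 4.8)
The whole proof rests on one algebraic observation that turns the problem into a statement about a single auxiliary semigroup. By Proposition~\ref{prop:limit-operator}(a) the limit operator $P$ is a projection that commutes with each $S(t)$, and I would first record the sharper identities
\begin{align*}
	S(t)P = PS(t) = P \qquad \text{for all } t \in (0,\infty),
\end{align*}
which follow by interchanging $S(t)$ with the (norm, hence in particular strong) limit defining $P$, using $S(t)S(s) = S(t+s)$ and the continuity of $S(t)$. Setting $R(t) := S(t) - P$ and expanding
\begin{align*}
	R(t)R(s) = S(t+s) - S(t)P - PS(s) + P^2 = S(t+s) - P = R(t+s),
\end{align*}
I conclude that $R = (S(t)-P)_{t>0}$ is itself a semigroup on $E$. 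It is bounded, since $S$ is bounded by Proposition~\ref{prop:limit-operator}(a) and $P = \lim_t S(t)$ inherits this bound, so $C := \sup_{t>0}\norm{R(t)} < \infty$; and by hypothesis $\norm{R(t)} \to 0$ as $t \to \infty$.

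The target estimate $\norm{S(t)-P} \le Me^{-\delta t}$ is now exactly the statement that the semigroup $R$ decays exponentially, and this I would extract from submultiplicativity. Because $R(t) \to 0$ uniformly, I can fix a time $t_0 > 0$ with $q := \norm{R(t_0)} < 1$. The semigroup law gives $R(nt_0) = R(t_0)^n$, whence $\norm{R(nt_0)} \le q^n$ for every $n \in \bbN$. For an arbitrary $t \ge t_0$ I would write $t = nt_0 + r$ with $n = \lfloor t/t_0 \rfloor \ge 1$ and $r \in [0,t_0)$, and factor $R(t) = R(r)R(nt_0)$ (reading $R(0)$ as the identity on the factor when $r=0$), so that
\begin{align*}
	\norm{R(t)} \le C\, q^n \le C\, q^{t/t_0 - 1} = \left(C q^{-1}\right) e^{-\delta t}, \qquad \delta := -\tfrac{\ln q}{t_0} > 0,
\end{align*}
using $n \ge t/t_0 - 1$ and $0 < q < 1$. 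For the remaining range $0 < t < t_0$ the trivial bound $\norm{R(t)} \le C \le C e^{\delta t_0} e^{-\delta t}$ suffices, so a single constant $M := \max\{C q^{-1},\, C e^{\delta t_0}\}$ works for all $t$.

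The only genuine content is the first paragraph: recognising that $S(t)-P$ is a semigroup, which hinges on the absorption identities $S(t)P = PS(t) = P$ coming from $P$ being the limit projection. Once that is in place, the exponential decay is the standard fact that a bounded semigroup whose norm drops below $1$ at some time has negative growth bound, and the only routine care needed is the decomposition $t = nt_0 + r$ together with the boundedness of $R$ near $0$ to cover the short-time remainder. (Alternatively, one may phrase the last step as $\gbd(R) \le \frac{1}{t_0}\ln\norm{R(t_0)} < 0$ via the growth-bound formula and read off $\delta$ and $M$ from the definition of $\gbd$.)
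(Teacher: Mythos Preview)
Your proof is correct and follows essentially the same route as the paper: both pass to the auxiliary semigroup $R(t)=S(t)-P=S(t)(\id_E-P)$ and exploit that its norm eventually drops below $1$ to force exponential decay. The only cosmetic difference is that the paper phrases the last step via the spectral radius of $S(1)$ (noting $S(1)^n\to 0$ gives $\spr(S(1))<1$), whereas you carry out the submultiplicativity estimate by hand with a general $t_0$.
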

\begin{proof}
	By considering the semigroup $(0,\infty) \ni t \mapsto S(t)(\id_E - P) \in \calL(E)$ we may assume that $P = 0$. Then $S(1)^n \to 0$ as $n \to \infty$, which shows that the spectral radius of $S(1)$ is strictly less than $1$. This, together with the local boundedness of $S$ at the time $0$, implies the assertion.
\end{proof}

\subsection{Uniform convergence}

In the following we discuss a sufficient criterion for an irreducible positive semigroup to converge uniformly as $t \to \infty$. This criterion is a based on an \emph{asymptotic compactness} (or \emph{quasi-compactness}) assumption that is defined as follows.

Let $\calK(E) \subseteq \calL(E)$ denote the space of all compact linear operators on $E$. For every $R \in \calL(E)$ we call
\begin{align*}
	\normEss{R} := \inf \{\norm{R-K}: \, K \in \calK(E)\}
\end{align*}
the \emph{essential norm} of $R$. In fact, $\normEss{\argument}$ is a semi-norm on $\calL(E)$ (and technically speaking, the essential norm of an operator $R \in \calL(E)$ coincides with the quotient norm of the equivalence class of $R$ in the \emph{Calkin algebra} $\calL(E) / \calK(E)$).

\begin{proposition_and_definition}[Asymptotically compact semigroups]
	A semigroup $S$ on $E$ is called \emph{asymptotically compact} (often also \emph{quasi-compact}) if it satisfies the following equivalent assertions:
	\begin{enumerate}[(i)]
		\item $\lim_{t \to \infty} \normEss{S(t)} = 0$.
		\item There exists a times $t \in (0,\infty)$ such that $\normEss{S(t)} < 1$.
	\end{enumerate}
\end{proposition_and_definition}
\begin{proof}[Proof of the equivalence]
	If (ii) holds, then the submultiplicativity of $\normEss{\argument}$ together with the local boundedness of $S$ at $0$ implies~(i).
\end{proof}

Here is a criterion for uniform convergence of an irreducible semigroup. In the version that we present here, no time continuity of the semigroup is required. The theorem is essentially due to Lotz \cite[Theorem~4]{Lotz1986}; we make the modification that we assume $\gbd(S) = 0$ and irreducibility instead of requiring a priori boundedness of the semigroup.

\begin{theorem} \label{thm:lotz-irred}
	Let $S$ be a positive and irreducible semigroup on $E$ with growth bound $\gbd(S) = 0$. If $S$ is asymptotically compact, then there exists a quasi-interior point $u \in E_+$ and a strictly positive functional $\varphi \in E'$ such that $S(t)$ converges with respect to the operator norm to $\varphi \otimes u$ as $t \to \infty$.
\end{theorem}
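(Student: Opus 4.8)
The plan is to use asymptotic compactness to confine the long-time behaviour of $S$ to a finite-dimensional, semigroup-invariant subspace, and then to use positivity together with irreducibility to show that nothing but convergence can happen there. First I would invoke condition~(ii) in the definition of asymptotic compactness to fix a time $t_1 \in (0,\infty)$ with $\normEss{S(t_1)} < 1$ and set $T := S(t_1)$. Since $\gbd(S) = 0$ we have $\spr(T) = e^{t_1 \gbd(S)} = 1$, and because $T$ is positive its spectral radius lies in its spectrum, so $1 \in \spec(T)$; on the other hand $\sprEss(T) \le \normEss{T} < 1$. Hence the peripheral spectrum $\spec(T) \cap \bbT$ lies in $\{\modulus{\lambda} > \sprEss(T)\}$ and consists of finitely many poles of the resolvent with finite-dimensional spectral projections. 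Let $Q$ be the spectral projection associated with $\spec(T) \cap \bbT$ and put $F := \operatorname{ran} Q$; then $F$ is finite-dimensional, and as every $S(t)$ commutes with $T$ it commutes with $Q$, so $F$ and $\ker Q$ are invariant under the whole semigroup. On $\ker Q$ one has $\spr(T|_{\ker Q}) < 1$; writing $t = n t_1 + r$ with $r \in (0,t_1]$ and using $S(t)(\id_E - Q) = S(r)\,(T(\id_E - Q))^n$ together with the bound $\sup_{r \in (0,t_1]} \norm{S(r)} < \infty$ from exponential boundedness, I obtain $\norm{S(t)(\id_E - Q)} \to 0$ geometrically as $t \to \infty$.

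From this the tail $\{S(t) : t \ge t_1\}$ is relatively compact in operator norm: each $S(t)$ differs from $S(t)Q = (S(t)|_F)Q$ by a term tending to $0$, and $\{S(t)|_F\}$ has compact closure $\Gamma$ in the finite-dimensional algebra $\calL(F)$. Each $S(t)|_F$ is invertible (its inverse being $S(t_1 - r)|_F$ up to the $\id_F$ on $F$), the operators commute and are uniformly bounded, so $\Gamma$ is a compact abelian group with identity $\id_F$. Consequently the set of cluster points of $S(t)$ as $t \to \infty$ is exactly $\mathcal G := \{gQ : g \in \Gamma\}$, a compact abelian group of positive operators (each a norm-limit of positive operators $S(t_n)$) with identity element $Q$.

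The decisive step -- and the one I expect to be the main obstacle -- is to show that $\mathcal G$ is trivial, i.e. $\Gamma = \{\id_F\}$, equivalently $\spec(T) \cap \bbT = \{1\}$ with $1$ a first-order pole. What makes this delicate is that asymptotic compactness alone does not forbid a nontrivial peripheral part: a single positive operator may have peripheral spectrum equal to a nontrivial finite cyclic group, and then $\mathcal G$ would rotate and prevent convergence. This is precisely the asymptotically periodic behaviour displayed, for instance, by the rotation semigroup on $L^p(\bbT)$ -- which is positive and irreducible but, tellingly, not asymptotically compact. What excludes it here is the \emph{continuous one-parameter} structure combined with irreducibility: for a positive irreducible semigroup the peripheral (boundary) spectrum carries an \emph{additive} cyclicity, so that a single nontrivial rotation would force an infinite peripheral spectrum, contradicting the finiteness obtained above. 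Establishing this additive cyclicity is the heart of the matter, and it is genuinely subtle because no time-regularity of $S$ is assumed, so one cannot simply pass to a generator and read off the boundary spectrum; instead one must exploit the domination of the peripheral eigenvectors by the (strictly positive) Perron data across all times $t$ simultaneously, which is the technically demanding part.

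Once $\mathcal G = \{Q\}$ is established, the proof closes quickly. Then $Q$ is the only cluster point of the relatively compact tail, so $S(t) \to Q$ in operator norm as $t \to \infty$; in particular $S(t)f$ converges for every $f \in E$, and Proposition~\ref{prop:limit-operator} applies. Since $1 \in \spec(T)$ forces $F \neq \{0\}$, the limit $Q$ is nonzero, so Proposition~\ref{prop:limit-operator}(c) yields a quasi-interior point $u \gg 0$ and a strictly positive functional $\varphi \in E'$ with $Q = \varphi \otimes u$. Thus $S(t) \to \varphi \otimes u$ uniformly, and Proposition~\ref{prop:exp-conv-is-uniform} shows in addition that the convergence is exponentially fast, completing the proof.
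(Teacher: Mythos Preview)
Your overall architecture is sound and is essentially what underlies Lotz's theorem (which the paper invokes as a black box for the bounded case): split off the peripheral spectral subspace $F$ of $T := S(t_1)$, kill the complementary part geometrically, and analyse the finite-dimensional dynamics on $F$. The gap is that you assert ``the operators \dots\ are uniformly bounded'' and hence that $\{S(t)|_F\}$ has compact closure $\Gamma$, but this is precisely what is at stake. From $\gbd(S) = 0$ you correctly get $\spr(T) = 1$, and asymptotic compactness makes the peripheral spectrum of $T$ a finite set of poles of the resolvent---but nothing yet forces these poles to be \emph{simple}. If $1$ were a pole of order $k \ge 2$, then $T|_F$ would carry a nontrivial Jordan block, $\norm{(T|_F)^n}$ would grow like $n^{k-1}$, and $\Gamma$ would not be compact at all. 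You do list ``$1$ a first-order pole'' as part of the decisive step later, but by then your compact-group machinery has already consumed it; moreover your discussion of the obstacle concentrates entirely on additive cyclicity (ruling out peripheral eigenvalues $\lambda \neq 1$), which is a separate issue and does not by itself give pole simplicity.

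The paper keeps the two issues apart. Step~(i) is your cyclicity intuition: the continuous one-parameter structure together with irreducibility forces $\spec(S(t)) \cap \bbT \subseteq \{1\}$ for every $t$ (the paper refers to Lotz's argument for this). Step~(ii) establishes pole order $1$ by a direct lattice argument that is independent of~(i): the set
\begin{align*}
	I = \big\{ f \in E : (r-1)^k \,\Res(r, S(t_0))\modulus{f} \to 0 \text{ as } r \downarrow 1 \big\}
\end{align*}
is a closed $S$-invariant ideal, so irreducibility forces $I \in \{\{0\}, E\}$, and analysing which alternative holds yields $k = 1$. Only with both facts in hand is each $S(t)$ power-bounded, hence the semigroup bounded, and Lotz's convergence theorem applicable. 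If you insert this (or an equivalent) pole-order argument \emph{before} forming $\Gamma$, then $T|_F$ is diagonalisable with unimodular spectrum, your compact-abelian-group picture becomes legitimate, and the remainder of your outline goes through.
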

\begin{proof}[Sketch of proof]
	If the semigroup is bounded, uniform convergence to an operator $P$ as $t \to \infty$ follows from the positivity and the asymptotical compactness; this was proved by Lotz \cite[Theorem~4]{Lotz1986}; the form of the limit operator then follows from Proposition~\ref{prop:limit-operator}.
	
	To show boundedness, one can proceed in three steps: (i) First, one shows by the same arguments as in the proof of \cite[Theorem~4]{Lotz1986} that the spectrum of each operator $S(t)$ intersects the unit circle only in the number $1$. (ii) Fix a time $t_0$; one can show that the order $k$ of the pole $1$ of the resolvent of $S(t_0)$ equals $1$. This can be done by considering the closed $S$-invariant ideal 
	\begin{align*}
		I := \{f \in E: \; (r-1)^k \Res(r,S(t_0))\modulus{f} \to 0 \text{ as } r \downarrow 1\}
	\end{align*}
	and using the irreducibility of $S$. (iii) From the previous two steps we conclude that each operator $S(t_0)$ is power-bounded and hence, the semigroup is bounded.
\end{proof}

The conclusion of the theorem implies that one can decompose the space $E$ as $E = E_1 \oplus E_2$ where $E_1$ is the span of $u$ and where the semigroup tends exponentially to $0$ on $E_2 = \ker \varphi$. For $C_0$-semigroups one can replace the condition $\gbd(S) = 0$ in the theorem with the -- a priori weaker -- condition $\spb(A) = 0$, where $\spb(A)$ denotes the spectral bound of the generator $A$:

\begin{corollary} \label{cor:unif-conv-c_0}
	Let $S$ be a positive and irreducible $C_0$-semigroup on $E$ with generator $A$ and assume that the \emph{spectral bound}
	\begin{align*}
		\spb(A) := \sup \{\re \lambda: \, \lambda \in \spec(A)\}
	\end{align*}
	is equal to $0$. If $S$ is asymptotically compact, then the same conclusion as in Theorem~\ref{thm:lotz-irred} holds.
\end{corollary}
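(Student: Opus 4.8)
The plan is to deduce the corollary directly from Theorem~\ref{thm:lotz-irred} by verifying that, under the present hypotheses, the growth bound of $S$ is again equal to $0$. Once $\gbd(S) = 0$ is established, the semigroup satisfies all assumptions of the theorem, and its conclusion is precisely the asserted one.

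First I would turn the asymptotic compactness into a statement about the essential growth bound. Since $S$ is asymptotically compact, there is a time $t_0 \in (0,\infty)$ with $\normEss{S(t_0)} < 1$. Submultiplicativity of the essential norm gives $\normEss{S(nt_0)} \le \normEss{S(t_0)}^n \to 0$ as $n \to \infty$, and therefore the essential growth bound
\begin{align*}
	\gbd_{\operatorname{ess}}(S) := \lim_{t \to \infty} \frac{1}{t} \log \normEss{S(t)}
\end{align*}
satisfies $\gbd_{\operatorname{ess}}(S) < 0$.

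The decisive step is then the identity $\gbd(S) = \max\{\gbd_{\operatorname{ess}}(S),\, \spb(A)\}$, which is valid for every $C_0$-semigroup. The inequality $\gbd(S) \ge \max\{\gbd_{\operatorname{ess}}(S), \spb(A)\}$ is immediate from $\sprEss(S(t)) \le \spr(S(t))$ and from the general estimate $\spb(A) \le \gbd(S)$; the reverse inequality is the substantial part and rests on the partial spectral mapping theorem above the essential growth bound, by which every $\lambda \in \spec(S(t))$ with $\modulus{\lambda} > \sprEss(S(t))$ has the form $\lambda = e^{t\mu}$ for some $\mu \in \spec(A)$. This is a standard but nontrivial result of $C_0$-semigroup theory, and invoking it correctly is the main obstacle. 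Granting the identity, the hypotheses $\gbd_{\operatorname{ess}}(S) < 0$ and $\spb(A) = 0$ yield $\gbd(S) = \max\{\gbd_{\operatorname{ess}}(S),\, 0\} = 0$.

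With $\gbd(S) = 0$ in hand, $S$ is a positive, irreducible, asymptotically compact semigroup of growth bound zero, so Theorem~\ref{thm:lotz-irred} applies and produces a quasi-interior point $u \in E_+$ and a strictly positive functional $\varphi \in E'$ with $S(t) \to \varphi \otimes u$ in operator norm as $t \to \infty$. This is exactly the conclusion of the corollary.
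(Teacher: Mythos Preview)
Your proof is correct and follows precisely the route the paper indicates: reduce to Theorem~\ref{thm:lotz-irred} by showing $\gbd(S)=0$. The paper does not spell this out but simply cites \cite[Theorem~V.3.1]{Engel2000}, which provides exactly the structural fact you invoke---that for a $C_0$-semigroup the growth bound is the maximum of the essential growth bound and the spectral bound (equivalently, \cite[Corollary~IV.2.11]{Engel2000}); your argument makes explicit what the paper leaves implicit in that citation.
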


One can derive Corollary~\ref{cor:unif-conv-c_0} from the theorem by using \cite[Theorem~V.3.1]{Engel2000}. Alternatively, one can give a more direct proof of the corollary; such a direct proof can for instance be found in \cite[Theorem~VI.3.5]{Engel2006}.

\begin{further_references}
	\begin{enumerate}[(a)]
		\item Uniform convergence of positive $C_0$-semigroups can be characterised by means of \emph{essential norm continuity}, see \cite[Theorem~3.4]{Thieme1998}. 
		
		\item A criterion for a semigroup to be asymptotically compact is discussed in Section~\ref{section:forms}. Besides, we note that a semigroup $S$ is asymptotically compact iff one (equivalently all) of the operators $S(t)$ has essential spectral radius less than $1$. Sufficient criteria for this latter condition can, for instance, be found in \cite{IonescuTulcea1950}, \cite[Corollaire~1]{Hennion1993}, \cite[Theorem~2 and Corollary~7]{Lotz1986}, \cite[Theorem~2.6]{Martinez1993}, \cite[Theorem~1]{Miclo2015} and \cite[Sections~2 and~4]{Glueck2020}.
		
		\item For a positive $C_0$-semigroup $S$ on $L^p(\Omega,\mu)$ (for a measure space $(\Omega,\mu)$ and $p \in [1,\infty)$) the growth bound $\gbd(S)$ coincides with the spectral bound $\spb(A)$ of the generator $A$. In particular, if $\spb(A) < 0$, then $S_t$ converges uniformly to $0$ as $t \to \infty$. This result is due to Weis \cite{Weis1995, Weis1998}.
	\end{enumerate}
\end{further_references}

\subsection{Strong convergence} \label{subsection:strong-convergence}

In this subsection we recall two sufficient criteria for strong convergence of irreducible semigroups. The first of them requires a considerable amount of time regularity and can be interpreted as a Tauberian theorem. A semigroup $S$ on $E$ is called \emph{eventually norm continuous} if there exists a time $t_0 \in [0,\infty)$ such that the mapping $(t_0,\infty) \ni t \mapsto S(t) \in \calL(E)$ is continuous with respect to the operator norm on $\calL(E)$. If $t_0$ can be chosen as $0$, then we call $S$ \emph{immediately norm continuous}.

\begin{theorem} \label{thm:cyclic-ablv}
	Let $S$ be a positive, bounded and irreducible $C_0$-semigroup with generator $A$ on $E$ and assume that $S$ is eventually norm continuous.  Then precisely one of the following four situations occurs:
	\begin{enumerate}[\upshape (i)]
		\item $\dim (\ker A') = 0$. In this case, $S(t)f \to 0$ as $t \to \infty$ for each $f \in E$.
		
		\item $\dim (\ker A') = 1$ and $\dim (\ker A) = 1$. In this case, there exist a quasi-interior point $u \in E_+$ and a strictly positive functional $\varphi \in E'$ such that $S(t)f \to \langle \varphi, f \rangle u$ as $t \to \infty$ for each $f \in E$.
		
		\item $\dim (\ker A') = 1$ and $\dim (\ker A) = 0$. In this case, $S(t)$ does not converge strongly as $t \to \infty$.
		
		\item $\dim (\ker A') \ge 2$. In this case, $S(t)$ does not converge strongly as $t \to \infty$.
	\end{enumerate}
\end{theorem}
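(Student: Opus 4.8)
The plan is to combine the spectral consequences of eventual norm continuity with Perron--Frobenius structure theory and the Arendt--Batty--Lyubich--V\~u (ABLV) stability theorem, while reading off the two non-convergence cases directly from Proposition~\ref{prop:limit-operator}. The organising principle is that eventual norm continuity forces the boundary spectrum to be tiny, after which each of the four cases becomes a short argument.

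First I would pin down the boundary spectrum. Since $S$ is bounded we have $\spb(A)\le 0$, and eventual norm continuity yields both the spectral mapping theorem and the fact that $\sigma(A)\cap i\bbR$ is bounded, because $\norm{\Res(is,A)}\to 0$ as $\modulus{s}\to\infty$. On the other hand, for a positive irreducible semigroup the boundary spectrum $\sigma(A)\cap i\bbR$ is, if nonempty, additively cyclic, hence of the form $i\alpha\bbZ$; a bounded additive cyclic set must be $\{0\}$. Therefore $\sigma(A)\cap i\bbR\subseteq\{0\}$, which is countable, and since $\sigma(A')=\sigma(A)$ also $\sigma_{\mathrm p}(A')\cap i\bbR\subseteq\{0\}$. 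I would also record that irreducibility forces $\dim\ker A=\dim\fix(S)\in\{0,1\}$, with any nonzero fixed vector quasi-interior; this is what makes the four cases exhaustive on the $A$-side.

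Cases~(iii) and~(iv) then require nothing beyond Proposition~\ref{prop:limit-operator}. Suppose, for contradiction, that $S(t)$ converges strongly to $P$. By part~(c), $P$ is either $0$ or a rank-one operator $\varphi\otimes u$, so $\dim\operatorname{ran}P'=\dim\operatorname{ran}P\le 1$; by part~(b), $\operatorname{ran}P'=\fix(S')=\ker A'$. In case~(iv) this gives $\dim\ker A'\le 1$, contradicting $\dim\ker A'\ge 2$; in case~(iii), $\operatorname{ran}P=\fix(S)=\ker A=\{0\}$ forces $P=0$, hence $\ker A'=\{0\}$, contradicting $\dim\ker A'=1$. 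For the convergence cases I would invoke ABLV. In case~(i), $\dim\ker A'=0$ means $\sigma_{\mathrm p}(A')\cap i\bbR=\emptyset$, so the countable-boundary-spectrum hypothesis plus ABLV gives $S(t)f\to 0$ for every $f$ (and as a by-product $\ker A=\fix(S)=\{0\}$, ruling out the missing pair $\dim\ker A'=0$, $\dim\ker A=1$). In case~(ii) I would build the limit by hand rather than via a pole analysis: irreducibility gives a quasi-interior $u\gg0$ spanning $\ker A$ and a strictly positive $\varphi$ spanning $\ker A'$, normalised so that $\langle\varphi,u\rangle=1$. Since $\overline{\operatorname{ran}A}={}^{\perp}\ker A'=\ker\varphi$ and $u\notin\ker\varphi$, we obtain $E=\linSpan\{u\}\oplus\ker\varphi$ with projection $P=\varphi\otimes u$, which commutes with $S$ and leaves $\ker\varphi$ invariant. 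On $\ker\varphi$ the restricted semigroup still has countable boundary spectrum, and I would check that its adjoint has no fixed functional: a fixed functional of the restricted adjoint lifts to $\psi\in E'$ with $S(t)'\psi-\psi\in\linSpan\{\varphi\}$, whence $S(t)'\psi=\psi+c(t)\varphi$ with $c$ additive, and boundedness forces $c\equiv 0$, so $\psi\in\ker A'$. Thus ABLV applies on $\ker\varphi$, giving $S(t)(\id_E-P)\to 0$ strongly and hence $S(t)f\to Pf=\langle\varphi,f\rangle u$.

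The main obstacle is the first step: establishing $\sigma(A)\cap i\bbR\subseteq\{0\}$ requires the correct Perron--Frobenius cyclicity of the boundary spectrum of an irreducible positive semigroup, matched with the (non-trivial) spectral consequence of eventual norm continuity that keeps the imaginary-axis spectrum bounded. The heaviest analytic input is then the ABLV theorem itself, used for cases~(i) and~(ii); by contrast, the non-convergence cases~(iii)--(iv) come essentially for free once Proposition~\ref{prop:limit-operator} is in hand. A secondary technical point is the verification in case~(ii) that passing to $\ker\varphi$ removes the only boundary eigenvalue of the adjoint, which the additive-coefficient argument settles using boundedness alone.
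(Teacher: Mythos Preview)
Your argument is correct and follows essentially the same route as the paper: reduce the boundary spectrum to $\{0\}$ via cyclicity combined with the boundedness of $\sigma(A)\cap i\bbR$ coming from eventual norm continuity, then invoke ABLV for the convergence cases and Proposition~\ref{prop:limit-operator} for the non-convergence cases. The only cosmetic differences are that the paper cites a ready-made result for case~(i) rather than appealing to ABLV directly, and in case~(ii) it phrases the splitting as mean ergodicity (since $\dim\ker A=\dim\ker A'$) instead of constructing $P=\varphi\otimes u$ by hand and running your additive-coefficient argument; the content is the same.
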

\begin{proof}
	Since the semigroup $S$ is bounded, the dual fixed space $\ker A'$ always separates the fixed space $\ker A$; thus, $\dim(\ker A) \le \dim(\ker A')$. In particular, we are always in one of the four situations~(i)--(iv). Let us now show that the long-lime behaviour is as claimed in each case.
	
	(i) Strong convergence to $0$ in case that $\ker A' = \{0\}$ is, for instance, proved in \cite[Theorem~C-IV-1.5]{Arendt1986}.
	
	(ii) The assumptions imply, by means of \emph{cyclicity} of the peripheral spectrum, that $\spec(A)$ intersects the imaginary axis only in $0$, see \cite[Theorem~C-III-2.10 and Corollary~C-III-2.13]{Arendt1986}. Moreover, the semigroup is mean ergodic since $\ker A$ and $\ker A'$ have the same dimension, so convergence follows from the ABLV theorem (\cite[Theorem~2.4]{Arendt1988} or \cite[Theorem on p.\,39]{Lyubich1988}) by splitting off the range of the mean ergodic projection. The form of the limit operator follows from Proposition~\ref{prop:limit-operator}(c).
	
	(iii)~and~(iv) It follows from Proposition~\ref{prop:limit-operator} that the semigroup cannot be strongly convergent in these cases.
\end{proof}

The situation is simpler when the Banach lattice $E$ has \emph{order continuous norm}; this is for instance the case for every $L^p$-space if $1 \le p < \infty$. For a general definition of the notion \emph{order continuous norm} we refer, e.g., to \cite[Definition~2.4.1]{Meyer-Nieberg1991}.

\begin{corollary} \label{cor:cyclic-ablv-ocn}
	Assume that $E$ has order continuous norm. Let $S$ be a positive, bounded and irreducible $C_0$-semigroup with generator $A$ on $E$ and assume that $S$ is eventually norm continuous.
	
	If $\ker A \not= \{0\}$, then there exist a quasi-interior point $u \in E_+$ and a strictly positive functional $\varphi \in E'$ such that $S(t)f \to \langle \varphi, f \rangle u$ as $t \to \infty$ for each $f \in E$.
\end{corollary}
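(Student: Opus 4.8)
The plan is to deduce the corollary from Theorem~\ref{thm:cyclic-ablv} by showing that, under the order continuity assumption, the hypothesis $\ker A \neq \{0\}$ forces us into case~(ii) of that theorem. Since $S$ is bounded, the proof of Theorem~\ref{thm:cyclic-ablv} already gives $\dim(\ker A) \le \dim(\ker A')$; together with $\ker A \neq \{0\}$ this yields $\dim(\ker A') \ge 1$. Moreover, by the structural consequence of irreducibility used in the proof of Proposition~\ref{prop:limit-operator}(c), the nonzero fixed space $\ker A$ is one-dimensional and spanned by a quasi-interior point $u \gg 0$ with $S(t)u = u$ for all $t$. Hence it suffices to rule out $\dim(\ker A') \ge 2$: once $\dim(\ker A') = 1$ we are in situation~(ii), whose conclusion is exactly the asserted convergence to $\langle \varphi, \argument\rangle u$. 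I also record, for repeated use, that by irreducibility every positive nonzero fixed functional $\psi \in \ker A'$ is strictly positive: if $\langle \psi, f\rangle = 0$ for some $0 < f \in E$, then $\langle \psi, S(t)f\rangle = \langle \psi, f\rangle = 0$ for all $t$, contradicting irreducibility.

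The next step is to show that $\ker A'$ is a sublattice of $E'$. For this I pass to the dual semigroup $S'$ on $E'$ and use $u$ as a test functional: regarding $u \in E \subseteq E''$ as a functional on $E'$, the fact that $u$ is a quasi-interior point of $E_+$ means precisely that $\langle \psi, u\rangle > 0$ for every $0 < \psi \in E'$, i.e.\ $u$ is strictly positive on $E'$; and $u$ is invariant under the adjoints in the sense that $\langle S'(t)\psi, u\rangle = \langle \psi, S(t)u\rangle = \langle \psi, u\rangle$. Now if $\varphi \in \ker A'$, then $|\varphi| = |S'(t)\varphi| \le S'(t)|\varphi|$ by positivity of $S'(t)$, so $S'(t)|\varphi| - |\varphi| \ge 0$; testing against $u$ gives $\langle S'(t)|\varphi| - |\varphi|, u\rangle = \langle |\varphi|, S(t)u\rangle - \langle|\varphi|, u\rangle = 0$, and strict positivity of $u$ on $E'$ forces $S'(t)|\varphi| = |\varphi|$. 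Thus $|\varphi| \in \ker A'$, so $\ker A'$ is a sublattice.

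Now suppose, for contradiction, that $\dim(\ker A') \ge 2$. A finite-dimensional sublattice of the Archimedean space $E'$ is lattice isomorphic to some $\bbR^n$, so $\ker A'$ contains two disjoint nonzero positive functionals $\varphi_1, \varphi_2$ with $\varphi_1 \wedge \varphi_2 = 0$; by the remark above both are strictly positive. Evaluating the infimum at the fixed quasi-interior point $u$ via the Riesz--Kantorovich formula, \[ 0 = (\varphi_1 \wedge \varphi_2)(u) = \inf\{\langle \varphi_1, g\rangle + \langle \varphi_2, u - g\rangle : 0 \le g \le u\}, \] I obtain a sequence $(g_n)$ in the order interval $[0,u]$ with $\langle \varphi_1, g_n\rangle \to 0$ and $\langle \varphi_2, u - g_n\rangle \to 0$.

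This is exactly where order continuity of the norm enters: it guarantees that the order interval $[0,u]$ is weakly compact (and, being convex and norm closed, weakly closed). Passing to a weakly convergent subnet $g_{n_k} \rightharpoonup g \in [0,u]$, I get $\langle \varphi_1, g\rangle = 0$ and $\langle \varphi_2, u - g\rangle = 0$. Since $\varphi_1$ is strictly positive and $g \ge 0$, the first identity forces $g = 0$; but then the second reads $\langle \varphi_2, u\rangle = 0$ with $u > 0$, contradicting strict positivity of $\varphi_2$. Hence $\dim(\ker A') = 1$, and together with $1 \le \dim(\ker A) \le \dim(\ker A')$ this places us in case~(ii) of Theorem~\ref{thm:cyclic-ablv}, which yields the claim. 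The main obstacle — and the only place the lattice structure of the dual is genuinely needed — is the second step: establishing that $\ker A'$ is a sublattice, so that dimension at least two produces truly disjoint fixed functionals which, by irreducibility, are both strictly positive. The order continuity hypothesis then does its work solely through the weak compactness of order intervals in the final step.
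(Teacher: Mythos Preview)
Your proof is correct and takes a genuinely different route from the paper's. The paper argues via mean ergodicity: order continuity of the norm forces all orbits of the bounded positive semigroup to be relatively weakly compact (citing \cite[Proposition~2.2]{Keicher2008}), hence $S$ is mean ergodic, which gives $\dim(\ker A)=\dim(\ker A')$; the existence of a nonzero positive element in $\ker A'$ together with irreducibility then yields $\dim(\ker A)=1$ via \cite[Proposition~C-III-3.5(c)]{Arendt1986}, and one lands in case~(ii) of Theorem~\ref{thm:cyclic-ablv}. You bypass the mean ergodic machinery entirely: after securing a fixed quasi-interior point $u$ (same citation the paper uses in Proposition~\ref{prop:limit-operator}(c)), you show directly that $\ker A'$ is a sublattice of $E'$ by testing against $u$, and then rule out $\dim(\ker A')\ge 2$ by producing two disjoint strictly positive fixed functionals and playing them off against each other on the order interval $[0,u]$ via the Riesz--Kantorovich formula. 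In your argument order continuity enters only through weak compactness of $[0,u]$, whereas in the paper it enters through weak compactness of orbits. Your approach is more hands-on and makes the role of each hypothesis transparent; the paper's is shorter but leans on external ergodic-theoretic and Perron--Frobenius references. One small imprecision worth tightening: you appeal to the structure of finite-dimensional Archimedean sublattices, but $\ker A'$ is not a priori finite-dimensional. What you actually need---and what is immediate---is that a sublattice of dimension at least two contains some $\psi$ with $\psi^+,\psi^-$ both nonzero (otherwise every element is comparable to $0$ and an Archimedean argument forces dimension one), and then $\varphi_1=\psi^+$, $\varphi_2=\psi^-$ are your disjoint pair.
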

\begin{proof}
	The assumptions imply that orbits of $S$ are relatively weakly compact in $E$ (see \cite[Proposition~2.2]{Keicher2008}), hence $S$ is mean ergodic. Thus, $\dim(\ker A) = \dim(\ker A')$. One can show, for instance as in the proof of \cite[Proposition~3.11(c)]{Gerlach2019}, that $\ker A'$ contains a non-zero positive element, so we have $\dim(\ker A) = 1$ according to \cite[Proposition~C-III-3.5(c)]{Arendt1986}. Hence, the assertion follows from Theorem~\ref{thm:cyclic-ablv}(ii).
\end{proof}

In the second theorem in this subsection we consider semigroups that dominate a so-called \emph{kernel operator}. If $(\Omega,\mu)$ is a $\sigma$-finite measure space and $E = L^p(\Omega,\mu)$ for $1 \le p < \infty$, then a positive linear operator $T \in \calL(E)$ is called a \emph{kernel operator} if there exists a measurable function $k: \Omega \times \Omega \to [0,\infty)$ such that the following holds for each $f \in E$: the function $k(\omega, \argument) f(\argument)$ is integrable for almost every $\omega \in \Omega$ and the equality
\begin{align*}
	Tf = \int_\Omega k(\argument, \omega) f(\omega) \dx \mu(\omega)
\end{align*}
holds. 

Kernel operators can be characterised abstractly in a purely lattice theoretic way, and this characterization can be used as a definition of kernel operators on more general Banach lattices; see \cite[Proposition~IV.9.8]{Schaefer1974} and \cite[Section~3.3 and Corollary~3.7.3]{Meyer-Nieberg1991} for details. We have the following convergence theorem for semigroups that contain -- or only dominate -- a kernel operator. No time regularity is needed.

\begin{theorem} \label{thm:ggg}
	Assume that $E$ has order continuous norm and let $S$ be a positive, bounded and irreducible semigroup on $E$. Suppose that $S$ has a non-zero fixed point and that there exists a time $t_0 \in (0,\infty)$ and a non-zero kernel operator $K \in \calL(E)$ such that $0 \le K \le S(t_0)$.
	
	Then there exists a quasi-interior point $u \in E_+$ and a strictly positive functional $\varphi \in E'$ such that $S(t)f \to \langle \varphi, f \rangle u$ as $t \to \infty$ for each $f \in E$.
\end{theorem}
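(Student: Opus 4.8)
The plan is to combine the strong-convergence machinery for irreducible semigroups with the structural consequences of dominating a kernel operator. Because $E$ has order continuous norm and $S$ is positive and bounded, the first thing I would establish is mean ergodicity: bounded positive semigroups on such lattices have relatively weakly compact orbits (as in the proof of Corollary~\ref{cor:cyclic-ablv-ocn}, via \cite[Proposition~2.2]{Keicher2008}), so the Cesàro means converge and the fixed space $\ker A$ separates the dual fixed space in the usual way. Since we are given a non-zero fixed point, irreducibility forces $\ker A$ to be one-dimensional and spanned by a quasi-interior point $u$, and the dual fixed space to be spanned by a strictly positive functional $\varphi$; this is exactly the conclusion of Proposition~\ref{prop:limit-operator}(c) applied to the mean ergodic projection, which must therefore equal $\varphi \otimes u$. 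Thus the candidate limit operator is already identified, and the entire problem reduces to upgrading \emph{Cesàro} convergence to genuine \emph{strong} convergence of $S(t)$ itself.

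**The key step** where the kernel hypothesis enters is in controlling the peripheral point spectrum on the boundary. The obstruction to strong convergence is the possible presence of unimodular eigenvalues $e^{i\beta}$ (with $\beta \neq 0$) of the operators $S(t)$, equivalently purely imaginary eigenvalues $i\alpha$ of $A$ with eigenfunctions that oscillate rather than converge. The classical domination result is that an irreducible positive semigroup dominating a non-zero kernel operator cannot support such rotational behaviour: the kernel part destroys the isometric structure that periodic boundary eigenvectors would require. Concretely, I would show that if $S(t_0) \ge K \ge 0$ with $K$ a non-zero kernel operator, then any peripheral eigenvalue of $S(t_0)$ must be $1$, and the corresponding eigenspace is spanned by $u$. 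The standard route is to assume $S(t_0) h = e^{i\beta} h$ for some $h$ with $\modulus{h} = u$ (the modulus is fixed by irreducibility), derive that equality $\modulus{S(t_0) h} = S(t_0)\modulus{h}$ forces $S(t_0)$ to act on the relevant sublattice as a lattice homomorphism, and then observe that a non-zero kernel component is incompatible with being such a homomorphism unless $\beta = 0$.

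**The hard part** will be making this peripheral-spectrum argument rigorous without any time-regularity assumption, since we have deliberately dropped norm continuity: one cannot simply pass to the generator and invoke cyclicity of $\spec(A)$ as in Theorem~\ref{thm:cyclic-ablv}. I would therefore argue at the level of the single operator $T := S(t_0)$ together with the full discrete semigroup $(T^n)$, using that relative weak compactness of orbits makes the peripheral point spectrum of $T$ a subgroup of the unit circle and each peripheral eigenspace one-dimensional (this is the Perron–Frobenius structure theory of mean ergodic irreducible positive operators). The domination $T \ge K$ then excludes all peripheral eigenvalues except $1$, so the peripheral spectrum of $T$ is trivial; combined with weak compactness of orbits this yields strong convergence of $T^n$ to $\varphi \otimes u$ via the Jacobs–de Leeuw–Glicksberg splitting. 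The final step is to interpolate from the discrete-time convergence back to the continuous parameter: boundedness of $S$ and strong continuity of $t \mapsto S(t)f$ on the reversible part are not available, so I would instead use that the flow commutes with $T$ and that strong convergence along the lattice $t_0\bbN$ together with the semigroup relation and uniform boundedness upgrades to convergence along all $t \to \infty$, with the limit necessarily the common projection $\varphi \otimes u$. I expect the interplay between the absence of time regularity and the discrete-to-continuous passage to be the most delicate point, and the kernel/lattice-homomorphism incompatibility to be the essential mechanism driving the whole argument.
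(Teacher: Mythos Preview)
The paper does not argue this theorem itself; it cites \cite[Corollary~4.4]{Gerlach2019} for strong convergence and invokes Proposition~\ref{prop:limit-operator}(c) for the form of the limit. Your plan is effectively a reconstruction of the cited argument, and several ingredients --- relative weak compactness of orbits from order continuity together with the quasi-interior fixed point, the shape of the limit projection from irreducibility, and the discrete-to-continuous passage via $S(t) = S(s)T^n$ with uniformly bounded $S(s)$ --- are correct and in line with the reference. (A minor slip: you write $\ker A$ and $\ker A'$, but there is no generator here since $S$ is not assumed $C_0$; work with the fixed spaces of $S$ and $S'$ directly.)

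There is, however, a real gap at your central step. You assert that once the peripheral point spectrum of $T := S(t_0)$ is reduced to $\{1\}$, weak compactness of orbits and the Jacobs--de Leeuw--Glicksberg splitting yield strong convergence of $T^n$. This is false in general: JdLG only places $0$ in the \emph{weak} closure of each orbit in the stable part $E_s$, not in the norm closure. The Koopman operator $Tf = f\circ\tau$ of a weakly mixing measure-preserving transformation on $L^2$ of a probability space is positive, irreducible, power-bounded with relatively weakly compact orbits, and has peripheral point spectrum exactly $\{1\}$ --- yet it is an isometry, so $T^n$ cannot converge strongly. What is absent in that example is precisely the kernel hypothesis: a Koopman operator dominates no non-zero kernel operator. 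In \cite{Gerlach2019} the kernel property (equivalently, AM-compactness: kernel operators send order intervals to norm-precompact sets) does more than exclude unimodular eigenvalues; it is the mechanism that upgrades weak asymptotics to genuine norm convergence. Your outline invokes the kernel only for the peripheral-spectrum step and so misses this second, essential role.
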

\begin{proof}
	Strong convergence under these assumptions was proved in \cite[Corollary~4.4]{Gerlach2019}. The form of the limit operator follows from Proposition~\ref{prop:limit-operator}.
\end{proof}

\begin{further_references}
	\begin{enumerate}[(a)]
		\item The \emph{cyclicity of the peripheral spectrum} on which Theorem~\ref{thm:cyclic-ablv} is based has a rather long history, going back to the origins of Perron--Frobenius theory at the beginning of the 20th centure. For positive operators on Banach lattices, a quite general cyclicity result was first proved by Lotz \cite[Section~4]{Lotz1968}. For semigroups, such a result is due to Derndinger \cite[Theorem~3.7]{Derndinger1980} (in a special case) and Greiner \cite[Theorem~2.4]{Greiner1981} (in the general case).
		
		\item The asymptotic theory of semigroups that contain or dominate a kernel operator follows essentially two lines of development:
		\begin{itemize}
			\item For semigroups of kernel operators on $L^p$-spaces, a first convergence result is due to Greiner \cite[Korollar~3.11]{Greiner1982a}. This was followed by a series of papers and results in \cite[Theorem~12]{Davies2005}, \cite[Section~4]{Arendt2008}, \cite[Theorem~4.2]{Gerlach2013}, \cite{Gerlach2017} and \cite[Sections~3 and~4]{Gerlach2019}.
			
			\item For the important case of Markovian $C_0$-semigroups on $L^1$-spaces, Theorem~\ref{thm:ggg} is due to Pich\'or and Rudnicki \cite[Theorems~1 and~2]{Pichor2000}. Recent generalizations and adaptations of this result can be found in \cite[Theorem~2, Corollary~2 and Proposition~2]{Pichor2016}, \cite[Theorem~2.2]{Pichor2018} and \cite[Theorem~2.1]{Pichor2018a}.
		\end{itemize}
	\end{enumerate}
\end{further_references}

\section{Applications I}
\label{section:applications-i}

\subsection{Diffusion with nonlocal boundary conditions}

In the following example of the Laplace operator with nonlocal boundary conditions, which was studied in detail in \cite{Arendt2016} and in \cite[Chapter~3]{Kunkel2017}, Theorem~\ref{thm:lotz-irred} can be applied.

\begin{example}
	\label{ex:nonlocal-boundary-conditions}
	Let $\emptyset \not= \Omega \subseteq \bbR^d$ be open, bounded and connected and suppose that $\Omega$ has Lipschitz boundary. For each $z \in \partial \Omega$, let $\mu(z)$ be a Borel probability measure on $\Omega$ such that the mapping
	\begin{align*}
		\partial \Omega \ni z \mapsto \int_\Omega f(x) \mu(z, \dx x) \in \bbR
	\end{align*}
	is continuous for each bounded continuous function $f: \Omega \to \bbR$. We consider the Laplace operator $\Delta_\mu$ on $C(\overline{\Omega})$ with domain
	\begin{align*}
		\dom(\Delta_\mu) = \left\{ u \in C(\overline{\Omega}): \, \Delta u \in C(\overline{\Omega}) \, \text{ and } \, u(z) = \int_\Omega u(x) \mu(z,\dx x) \right\}.
	\end{align*}
	These boundary conditions are nonlocal since the behaviour of $u$ at the boundary is related to its behaviour in $\Omega$ for $u \in \dom(\Delta_\mu)$. The boundary conditions have the following probabilistic interpretation (which is also explained in \cite[p.\,2484]{Arendt2016}): we consider a Brownian motion in $\Omega$, and whenever a particle reaches the boundary at a point $z \in \partial \Omega$, it is immediately transported back into a position within $\Omega$, which is determined by the probability distribution $\mu(z,\argument)$.
	
	It is proved in \cite[Theorem~1.3]{Arendt2016} that $\Delta_\mu$ generates an immediately norm continuous (even holomorphic), positive and contractive semigroup $S$ on $C(\overline{\Omega})$ and each operator $S(t)$ is compact. Uniform convergence of the semigroup as $t \to \infty$ was also shown in \cite[Theorem~1.3]{Arendt2016}; in the following, we explain how this is related to Theorem~\ref{thm:lotz-irred}.
	
	We first note that the semigroup $S$ is not a $C_0$-semigroup. (One can still speak of a generator, though; see for instance \cite[Section~2]{Arendt2016} or \cite[Definition 3.2.5]{Arendt2011}). Since $\Omega$ is connected, the semigroup $S$ is irreducible, see \cite[Proposition~3.29]{Kunkel2017}. Moreover, the constant function $\one$ is a fixed vector of the semigroup since we assumed each measure $\mu(z)$ to be a probability measure; so the semigroup has growth bound $0$.
	
	It thus follows from Theorem~\ref{thm:lotz-irred} that $S_t$ converges with respect to the operator norm to $\nu \otimes \one$ as $t \to \infty$, for some Borel probability measure $\nu$ on $\overline{\Omega}$ which is supported everywhere on $\Omega$.
\end{example}

\begin{further_references}
	\begin{enumerate}[(a)]
		\item We point out that the Laplace operator in Example~\ref{ex:nonlocal-boundary-conditions} can be replaced with much more general elliptic operators; see \cite{Arendt2016} and \cite[Section~3]{Kunkel2017}. The same references also show that the condition that $\Omega$ have Lipschitz boundary can be somewhat relaxed.
		
		\item One can also study non-local Robin boundary conditions instead of non-local Dirichlet boundary conditions; see \cite[Section~4]{Kunkel2017} and \cite{Arendt2018}.
		
		\item Similar questions as in Example~\ref{ex:nonlocal-boundary-conditions} can also be studied on unbounded domains; this is the content of the recent article \cite{Kunze2020}.
	\end{enumerate}
\end{further_references}

\subsection{Strong convergence for Schrödinger semigroups on $L^1(\bbR^d)$}

We give an example for an application of Theorem~\ref{thm:cyclic-ablv}; the semigroups that occur in this example are discussed in more detail in \cite{Arendt1992a}.

\begin{example}
	\label{ex:schroedinger-sg-on-l_1}
	Let $0 \le m \in L^1_{\loc}(\bbR^d)$ and define the operator $A$ on $L^1(\bbR^d)$ by
	\begin{align*}
		\dom(A) & = \{u \in L^1(\bbR^d): \, \Delta u \in L^1(\bbR^d) \text{ and } mu \in L^1(\bbR^d) \}, \\
		Au & = \Delta u - mu.
	\end{align*}
	Then $A$ is the generator of a positive, irreducible, contractive and immediately norm continuous $C_0$-semigroup $S$ on $L^1(\bbR^d)$ that is dominated by the Gaussian semigroup $T$ on $L^1(\bbR^d)$; see \cite[Part~A]{Kato1986}. The domination of $S$ by $T$ implies that $\ker A = \{0\}$. Indeed if $f \in \ker A$, then
	\begin{align*}
		\modulus{f} = \modulus{S(t)f} \le S(t) \modulus{f} \le T(t) \modulus{f}
	\end{align*}
	for each $t \in (0,\infty)$. But the Gaussian semigroup $T$ is norm-preserving on the positive cone of $L^1(\bbR^d)$, so $\modulus{f}$ is a fixed vector of $T$, hence $f = 0$.
	
	So according to Theorem~\ref{thm:cyclic-ablv} the asymptotic behaviour of $S$ depends merely on the question whether $\ker A'$ is zero or non-zero. Let us discuss this in two particular cases:
	\begin{enumerate}[(a)]
		\item If the dimension $d$ is $1$ or $2$ and $m \not= 0$, then $\ker A' = \{0\}$. So, no matter how small the (non-zero) absorption term $m$ is, we have $S(t)f \to 0$ as $t \to \infty$ for each $f \in L^1(\bbR^d)$. For details, see \cite[Theorem~3.2]{Arendt1992a}
		
		\item If $d \ge 3$ and $\int_{\modulus{y} \ge 1} \frac{m(y)}{\modulus{y}^{d-2}} \dx y < \infty$, then $\ker A' \not= \{0\}$. Thus, the semigroup $S$ is not strongly convergent as $t \to \infty$. For details, we refer to \cite[Theorem~3.6]{Arendt1992a}. \medskip
	\end{enumerate}
\end{example}

In the above example we only had the cases where the semigroup is either convergent to $0$ or not strongly convergent at all. A Schrödinger semigroup (on $L^2$ instead of $L^1$) where one has strong convergence to a non-zero equilibrium will be discussed in Example~\ref{ex:schroedinger-on-l_2-strong-convergence}.

\begin{further_references}
	\begin{enumerate}[(a)]
		\item  Another way to place the results from~\cite{Arendt1992a} (from which Example~\ref{ex:schroedinger-sg-on-l_1} above is taken) in the abstract context of positive operators on vector lattices is presented in \cite{Teichmann1997}.
		
		\item For the important case of Markovian $C_0$-semigroups on $L^1$-spaces, Theorem~\ref{thm:ggg}, and various versions thereof, turn out to be extremely useful in mathematical biology. See for instance the recent papers \cite{Pichor2018a, Pichor2018} and the plenty of references therein.
		
		\item A related version of Theorem~\ref{thm:ggg} in \cite[Theorem~3.5]{Gerlach2019} is instrumental to prove convergence results for so-called \emph{transition semigroups} on spaces of measures, which occur frequently in stochastic analysis and in PDE theory and which are in many cases not $C_0$. For details we refer to the recent preprint \cite{GerlachTransition}.
	\end{enumerate}
\end{further_references}

\section{Semigroups associated with forms, and criteria for asymptotic compactness}
\label{section:forms}

An important class of semigroups are those associated with a bilinear form. In fact, many classical parabolic problems are governed by forms. Let $H$ be a Hilbert space over the real field.

\begin{definition}[Closed forms]
	A \emph{closed form} on $H$ is a pair $(a,V)$ with the following properties:
	\begin{enumerate}[(a)]
		\item $V$ is a Hilbert space that is densely and continuously embedded in $H$ (for short: $V \overset{\operatorname{d}}{\hookrightarrow} H$).
		
		\item $a: V \times V \to \bbR$ is a bilinear form which is \emph{continuous} (i.e., $\modulus{a(u,v)} \le M \norm{u}_V \norm{v}_V$ for all $u,v \in V$ and a fixed constant $M \ge 0$) and \emph{elliptic}, i.e., there exist $\omega \ge 0$ and $\alpha > 0$ such that
		\begin{align}
			\label{eq:elliptic-form}
			a(u,u) + \omega \norm{u}_H^2 \ge \alpha \norm{u}_V^2 \qquad \text{for all } u \in V.
		\end{align}
	\end{enumerate}
\end{definition}

Let $(a,V)$ be a closed form on $H$. Since $V$ is dense in $H$ there exists a unique operator $A$ on $H$ whose graph is given by $\Graph(A) = \{(u,f): \, u \in V, \; f \in H, \text{ and } a(u,v) = -\langle f,v\rangle_H \text{ for all } v \in V\}$. This operator $A$ generates of $C_0$-semigroup $S$ on $H$ which is immediately norm continuous (and even holomorphic). We say that the form $a$ is \emph{positive-coercive} if~\eqref{eq:elliptic-form} holds with $\omega = 0$.

If the embedding of $V$ into $H$ is not only continuous but even compact, then the operator $A$ has compact resolvent and each $S(t)$ is compact.

\begin{proposition}
	Let $(a,V)$ be a closed form on $H$.
	\begin{enumerate}[(a)]
		\item If $a$ is \emph{accretive}, i.e., $a(u,u) \ge 0$ for all $u \in V$, then $\norm{S(t)} \le 1$ for all $t \in (0,\infty)$.
		
		\item If $a$ is positive-coercive, then there exists a number $\delta > 0$ such that $\norm{S(t)} \le e^{-\delta t}$ for all $t \in (0,\infty)$.
	\end{enumerate}
\end{proposition}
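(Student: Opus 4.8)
The plan is to prove both estimates by the \emph{energy method}, which rests on the identity relating the form to its associated operator. First I would record that, by the very definition of $A$ through its graph, every $u \in \dom(A)$ lies in $V$ and satisfies $a(u,v) = -\langle Au, v\rangle_H$ for all $v \in V$; testing with $v = u$ gives the crucial identity
\[
    \langle Au, u\rangle_H = -a(u,u) \qquad (u \in \dom(A)).
\]
I would also recall the two standard facts about the $C_0$-semigroup $S$ generated by $A$: the domain $\dom(A)$ is dense in $H$ and invariant under each $S(t)$, and for $u_0 \in \dom(A)$ the orbit $u(t) := S(t)u_0$ is continuously differentiable with $u'(t) = Au(t) \in H$.

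With these in hand, fix $u_0 \in \dom(A)$ and consider the scalar function $t \mapsto \norm{u(t)}_H^2$. Differentiating and using the product rule together with the identity above yields
\[
    \frac{d}{dt}\norm{u(t)}_H^2 = 2\langle u'(t), u(t)\rangle_H = 2\langle Au(t), u(t)\rangle_H = -2\, a\bigl(u(t), u(t)\bigr),
\]
where the form may be evaluated at $u(t)$ because $u(t) \in \dom(A) \subseteq V$ for all $t$. This single differential relation drives both parts.

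For part~(a), accretivity gives $a(u(t),u(t)) \ge 0$, so the right-hand side is non-positive; hence $t \mapsto \norm{u(t)}_H$ is non-increasing and $\norm{u(t)}_H \le \norm{u_0}_H$. Since $\dom(A)$ is dense, this passes to all of $H$, so $\norm{S(t)} \le 1$. For part~(b), positive-coercivity is exactly \eqref{eq:elliptic-form} with $\omega = 0$, i.e.\ $a(u,u) \ge \alpha\norm{u}_V^2$; combining this with the continuous embedding $V \hookrightarrow H$, which furnishes a constant $c > 0$ with $\norm{u}_H \le c\norm{u}_V$, I obtain $a(u,u) \ge \delta \norm{u}_H^2$ with $\delta := \alpha/c^2 > 0$. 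The energy relation then becomes the differential inequality $\frac{d}{dt}\norm{u(t)}_H^2 \le -2\delta \norm{u(t)}_H^2$, and Gr\"onwall's lemma gives $\norm{u(t)}_H \le e^{-\delta t}\norm{u_0}_H$; density again upgrades this to $\norm{S(t)} \le e^{-\delta t}$.

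The only point requiring genuine care is the differentiation step: one must justify that for $u_0 \in \dom(A)$ the orbit is $C^1$ into $H$, stays in $\dom(A)$, and that $t \mapsto \norm{u(t)}_H^2$ may be differentiated by the product rule for the inner product -- so that the pointwise form identity applies for every $t$. Once this is secured, both conclusions follow from an elementary Gr\"onwall estimate and the density of $\dom(A)$, with no further subtleties.
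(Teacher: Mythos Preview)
Your argument is correct. Both your proof and the paper's rest on the same form identity $\langle Au,u\rangle_H = -a(u,u)$ and, for part~(b), the same constant $\delta = \alpha/c^2$ coming from the continuous embedding $V \hookrightarrow H$. The difference lies only in how this identity is converted into a semigroup estimate: the paper observes that the identity makes $A$ (respectively $A+\delta$) \emph{dissipative} and then invokes the standard fact that a dissipative generator yields a contraction semigroup (Lumer--Phillips, as in \cite[Theorem~II.3.15]{Engel2000}); you instead differentiate $t\mapsto\norm{S(t)u_0}_H^2$ along a classical orbit and close with Gr\"onwall. Your route is essentially the hands-on proof of the abstract result the paper cites, so it is more self-contained but requires the extra justification of the $C^1$-regularity of orbits starting in $\dom(A)$, while the paper's version is shorter once the Lumer--Phillips machinery is taken for granted.
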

\begin{proof}
	(a) If $a$ is accretive, then $A$ is dissipative and hence, $S$ is contractive \cite[Theorem~II.3.15]{Engel2000}.
	
	(b) There exists a constant $c_H > 0$ such that $\norm{u}_H^2 \le c_H \norm{u}_V^2$ for all $u\in V$. Therefore,	\begin{align*}
		- \langle Au,u\rangle_H = a(u,u) \ge \alpha \norm{u}_V^2 \ge \frac{\alpha}{c_H} \norm{u}_H^2.
	\end{align*}
	for $u \in \dom(A)$. If we set $\delta := \alpha / c_H$, we thus have $\langle (A+\delta)u, u \rangle_H \le 0$ for each $u \in \dom(A)$. So $A+\delta$ is dissipative and therefore generates a contractive semigroup. Hence, $\norm{e^{\delta t} S(t)} \le 1$ for all times $t$.
\end{proof}

If we relax the coercivity condition in an appropriate way we obtain an asymptotically compact semigroup. To this end, we use the following notion from \cite[Definition~4.2(b)]{Arendt2020}.

\begin{definition}[Essential positive-coercivity of a form]
	Let $V \overset{\operatorname{d}}{\hookrightarrow} H$ and let $a: V \times V \to \bbR$ be a bilinear mapping. Then $a$ is called \emph{essentially positive-coercive}	if $\norm{u_n}_V \to 0$ for every sequence $(u_n)$ in $V$ that satisfies $u_n \rightharpoonup 0$ in $V$ and $\limsup_{n \to \infty} a(u_n,u_n) \le 0$.
\end{definition}

Here, we use the symbol $u_n \rightharpoonup 0$ to denote \emph{weak convergence} to $0$ in $V$, i.e., $\langle u_n,v \rangle_V \to 0$ for each $v \in V$. Clearly, positive-coercivity implies essential positive-coercivity.

\begin{theorem}
	\label{thm:asymp-compact-via-forms}
	Let $V \overset{\operatorname{d}}{\hookrightarrow} H$ and let $a: V \times V \to \bbR$ be bilinear, continuous and essentially positive coercive. Then $(a,V)$ is closed and the associated $C_0$-semigroup $S$ is asymptotically compact.
\end{theorem}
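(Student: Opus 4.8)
The plan is to prove the two assertions—closedness of $(a,V)$ and asymptotic compactness of $S$—separately: the first by a soft compactness argument in $V$, the second by exhibiting $a$ as a finite-rank perturbation of a positive-coercive form and transferring this structure to the semigroup.

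Closedness. Since $a$ is continuous it only remains to verify the ellipticity estimate~\eqref{eq:elliptic-form}. I would argue by contradiction: if no pair $(\omega,\alpha)$ works, then choosing $\omega=n$ and $\alpha=1/n$ yields, after normalising, vectors $u_n\in V$ with $\norm{u_n}_V=1$ and $a(u_n,u_n)+n\norm{u_n}_H^2<1/n$. Continuity of $a$ gives $a(u_n,u_n)\ge -M$, whence $\norm{u_n}_H\to 0$ and $\limsup_n a(u_n,u_n)\le 0$. As $(u_n)$ is bounded in the Hilbert space $V$, a subsequence converges weakly in $V$, and its limit must be $0$ because $V\overset{\operatorname{d}}{\hookrightarrow}H$ is continuous and $u_n\to 0$ in $H$. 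Thus $u_{n_k}\rightharpoonup 0$ in $V$ with $\limsup_k a(u_{n_k},u_{n_k})\le 0$, so essential positive-coercivity forces $\norm{u_{n_k}}_V\to 0$, contradicting $\norm{u_{n_k}}_V=1$. Hence $(a,V)$ is closed and $S$ is a well-defined holomorphic $C_0$-semigroup.

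Decomposition of the form. Represent $a$ by $a(u,v)=\langle \calA u,v\rangle_V$ with $\calA\in\calL(V)$ and let $B:=\tfrac12(\calA+\calA^\ast)$ be its self-adjoint part, so that $a(u,u)=\langle Bu,u\rangle_V$. Essential positive-coercivity forces $\inf\specEss(B)>0$: otherwise some $\lambda\le 0$ lies in $\specEss(B)$, and a Weyl sequence for $B$ at $\lambda$ is orthonormal (hence weakly null) with $\langle Bu_n,u_n\rangle\to\lambda\le 0$ and $\norm{u_n}_V=1$, contradicting the definition. Pick $\beta\in(0,\inf\specEss(B))$; then the spectral projection $P$ of $B$ for $(-\infty,\beta)$ has finite rank, the operator $K:=(\beta\,\id-B)P$ is finite-rank and positive, and $B\ge\beta\,\id-K$. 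Consequently the continuous form $a_{\mathrm{coer}}(u,v):=a(u,v)+\langle Ku,v\rangle_V$ satisfies $a_{\mathrm{coer}}(u,u)\ge\beta\norm{u}_V^2$, i.e.\ it is positive-coercive, and $a=a_{\mathrm{coer}}-k$ with $k(u,v):=\langle Ku,v\rangle_V$ of finite rank.

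Transfer to the semigroup. Let $A_0$ and $S_0$ be the generator and $C_0$-semigroup associated with $a_{\mathrm{coer}}$; by part~(b) of the proposition above, $\norm{S_0(t)}\le e^{-\delta t}$ for some $\delta>0$. For $\lambda$ in the common resolvent set and $f\in H$, subtracting the weak resolvent equations $\lambda\langle u,v\rangle_H+a(u,v)=\langle f,v\rangle_H$ and $\lambda\langle u_0,v\rangle_H+a_{\mathrm{coer}}(u_0,v)=\langle f,v\rangle_H$ gives, for $w:=\Res(\lambda,A)f-\Res(\lambda,A_0)f$, the identity $\lambda\langle w,v\rangle_H+a_{\mathrm{coer}}(w,v)=\langle Ku,v\rangle_V$ for all $v\in V$. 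Since $Ku$ ranges in the finite-dimensional space $\operatorname{Range}(K)$ and $w$ depends boundedly and linearly on this source, the map $f\mapsto w$ has finite rank; thus $\Res(\lambda,A)-\Res(\lambda,A_0)$ is of finite rank. Representing $S(t)$ and $S_0(t)$ as Dunford integrals $\frac1{2\pi i}\int_\Gamma e^{\lambda t}\Res(\lambda,\argument)\,\mathrm d\lambda$ over a common sectorial contour $\Gamma$ (legitimate since both semigroups are holomorphic), the operator-norm-convergent integral of the finite-rank difference shows $S(t)-S_0(t)\in\calK(H)$ for every $t>0$. Hence $\normEss{S(t)}=\normEss{S_0(t)}\le\norm{S_0(t)}\le e^{-\delta t}\to 0$, so $S$ is asymptotically compact. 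The main obstacle is exactly this last transfer: essential positive-coercivity is only an asymptotic condition on $V$, and one must convert it into genuine compactness on $H$. The point is that the spectral calculus in the decomposition step produces a perturbation of \emph{finite rank}, which forces the resolvent difference to be finite rank even though form perturbations do not in general correspond to bounded operator perturbations of the generator on $H$.
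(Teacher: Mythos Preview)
Your argument is correct. The paper itself does not prove this theorem but simply cites \cite[Proposition~6.1 and Theorem~6.3]{Arendt2020}, so there is no in-paper proof to compare against; what you have written stands as a self-contained substitute for that citation.

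The strategy---establish ellipticity by a contradiction/weak-compactness argument in $V$, then use the spectral theorem for the self-adjoint part $B\in\calL(V)$ to peel off a finite-rank defect and reach a genuinely positive-coercive form $a_{\mathrm{coer}}$, and finally transfer this to the semigroup level via the Cauchy integral representation---is sound at every step. The factorisation of the resolvent difference through $\operatorname{Range}(K)$ is the key observation and is handled correctly. The only place that deserves an extra sentence is the existence of a \emph{common} contour $\Gamma$ for the two Dunford integrals: this works because $a$ and $a_{\mathrm{coer}}$ share the form domain $V$ and differ only by a $V$-bounded symmetric form, so after a common real shift both associated operators satisfy a sectorial resolvent estimate on the same sector. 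In fact you only need $S(t_0)-S_0(t_0)\in\calK(H)$ at a single time $t_0$ with $e^{-\delta t_0}<1$ (by condition~(ii) in the definition of asymptotic compactness), which relaxes the requirement considerably. Note also that the paper works over $\bbR$, so the holomorphic functional calculus implicitly passes through the complexification---routine, but worth a word.
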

\begin{proof}
	This has recently been proved in \cite[Proposition~6.1 and Theorem~6.3]{Arendt2020}.
\end{proof}

Thus, if we are in the situation of Theorem~\ref{thm:asymp-compact-via-forms} and know in addition that $S$ is positive and irreducible and that the largest spectral value of the associated operator $A$ is $0$, then $S(t)$ converges in operator norm to a rank-$1$ operator as $t \to \infty$ (by Corollary~\ref{cor:unif-conv-c_0}).

The following special case of the Beurling--Deny--Ouhabaz criterion is very helpful in establishing positivity and irreducibility in many concrete situations.

\begin{proposition}
	\label{prop:pos-and-irred-by-forms}
	Let $(\Omega,\mu)$ be a $\sigma$-finite measure space and $H = L^2(\Omega)$. Let $(a,V)$ be a closed form on $H$ and denote the associated $C_0$-semigroup on $H$ by $S$.
	\begin{enumerate}[(a)]
		\item If $u \in V$ implies $u^+ \in V$ and $a(u^+,u^-) \ge 0$, then $S$ is positive.
		
		\item Assume that $S$ is positive. If for each measurable set $B \subseteq \Omega$ the condition $\one_B V \subseteq V$ implies $\mu(B) = 0$ or $\mu(\Omega \setminus B) = 0$, then $S$ is irreducible.
	\end{enumerate}
\end{proposition}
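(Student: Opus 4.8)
The plan is to derive both statements from the \emph{Ouhabaz invariance criterion} for closed convex sets, which is the natural form-theoretic tool here. Recall that for a closed convex set $C \subseteq H$ with orthogonal projection $P_C$, the set $C$ is invariant under the semigroup $S$ (i.e.\ $S(t)C \subseteq C$ for all $t > 0$) if and only if $P_C V \subseteq V$ and $a(P_C u,\, u - P_C u) \ge 0$ for all $u \in V$. I would state this criterion first, citing Ouhabaz, since everything else reduces to choosing the right convex set $C$ and computing its metric projection $P_C$.

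For part~(a) I would apply the criterion to the positive cone $C = \{f \in H : f \ge 0\}$. A pointwise minimisation of $\norm{u - f}_H$ over $f \ge 0$ shows that the metric projection onto $C$ is exactly $P_C u = u^+$, so the hypothesis that $u \in V$ implies $u^+ \in V$ is precisely the condition $P_C V \subseteq V$. The remaining requirement $a(P_C u,\, u - P_C u) \ge 0$ reduces, after substituting $P_C u = u^+$, to the stated inequality involving $a(u^+, u^-)$ (here one must fix the sign convention for $u^-$ so that $u - u^+$ matches the term appearing in the hypothesis). The criterion then yields $S(t)C \subseteq C$ for all $t$, which is exactly positivity of $S$.

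For part~(b) I would first invoke the standard fact that a positive semigroup is irreducible precisely when it admits no closed invariant ideal other than $\{0\}$ and $H$; this links the functional definition of irreducibility used above with an invariance condition. In $L^2(\Omega)$ the closed ideals are exactly the subspaces $\one_B H = \{f \in H : f = 0 \text{ a.e.\ on } \Omega \setminus B\}$ for measurable $B \subseteq \Omega$, and the orthogonal projection onto such a subspace is $P u = \one_B u$. Arguing by contraposition, suppose $S$ is not irreducible; then some such ideal with $\mu(B) > 0$ and $\mu(\Omega \setminus B) > 0$ is $S$-invariant, and the \emph{necessity} part of the invariance criterion applied to this subspace yields $\one_B V \subseteq V$. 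This contradicts the hypothesis, so $S$ must be irreducible.

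The main obstacle is the invariance criterion itself: its proof, which passes through the resolvent of the generator and the contractivity of the projection $P_C$, is the genuinely non-trivial ingredient, though in a survey it may legitimately be cited rather than reproved. The remaining care points are routine but worth verifying explicitly: the identification of the metric projection onto the cone as $u \mapsto u^+$, the lattice-theoretic description of the closed ideals of $L^2(\Omega)$ as the subspaces $\one_B H$, and the sign bookkeeping for $u^-$ so that the form inequality in~(a) comes out exactly as stated.
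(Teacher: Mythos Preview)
The paper does not actually prove this proposition: it introduces it as ``a special case of the Beurling--Deny--Ouhabaz criterion'' and leaves it at that, with the monograph \cite{Ouhabaz2005} as the implicit reference. Your plan to derive both parts from Ouhabaz's invariance criterion for closed convex sets is precisely the argument that underlies the cited result, so your approach is fully in line with---indeed, more detailed than---what the paper offers.

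One point worth resolving explicitly rather than deferring to ``sign bookkeeping'': with the paper's convention $u^- = (-u)^+ = u^+ - u$, one has $u - P_C u = u - u^+ = -u^-$, so the Ouhabaz condition $a(P_C u,\, u - P_C u) \ge 0$ becomes $a(u^+, u^-) \le 0$, not $\ge 0$. The inequality as printed in the proposition therefore appears to carry the wrong sign relative to the paper's own definition of $u^-$ (compare \cite[Theorem~2.6]{Ouhabaz2005}); your vague remark that ``one must fix the sign convention'' papers over this rather than confronting it. For part~(b) your contrapositive argument via the necessity direction of the invariance criterion applied to the closed ideal $\one_B H$ is correct and is again the standard route.
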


In the above proposition, we used the notation $u^+$ for the pointwise supremum of $u$ and $0$ and the notation $u^-$ for the function $(-u)^+ = u^+ - u$.

In all our applications, $\Omega$ will be a (non-empty) open subset of $\bbR^d$ (with the Lebesgue measure) and $V$ will be a subspace of the first Sobolev space
\begin{align*}
	H^1(\Omega) := \{u \in L^2(\Omega): \, \partial_j u \in L^2(\Omega) \text{ for all } j = 1,\dots, d\},
\end{align*}
where we understand the derivative $\partial_j u$ in the sense of distributions. The space $H^1(\Omega)$ is a \emph{sublattice} of $L^2(\Omega)$, which means that $u^+,u^- \in H^1(\Omega)$ whenever $u \in H^1(\Omega)$. In fact, we have $\partial_j (u^+) = \one_{\{\omega \in \Omega: \, u(\omega) > 0\}} \partial_j u$ for each $u \in H^1(\Omega)$ \cite[Proposition~4.4]{Ouhabaz2005}.

The following lemma, which we quote from \cite[Lemma~11.1.1]{Arendt2005}, is the key to apply Proposition~\ref{prop:pos-and-irred-by-forms}(b) to forms defined on subspaces of $H^1(\Omega)$.

\begin{lemma}
	\label{lem:sobolev-space-no-band-projections}
	Let $\Omega \subseteq \bbR^d$ be non-empty, open and connected, and let $B \subseteq \Omega$ be Borel measurable. Assume that $\one_B v \in H^1(\Omega)$ for all test functions $v$ on $\Omega$. Then $\lambda(B) = 0$ or $\lambda(\Omega \setminus B) = 0$ (where $\lambda$ denotes the Lebesgue measure on $\Omega$).
\end{lemma}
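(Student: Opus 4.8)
The plan is to show that the hypothesis forces $\one_B$ to be an $H^1_\loc$-function whose distributional gradient vanishes; since $\Omega$ is connected, this makes $\one_B$ almost everywhere equal to a constant, which can only be $0$ or $1$, and that is exactly the claim. So I would reduce the lemma to two Sobolev-calculus facts about the indicator function $\one_B$.

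First I would verify that $\one_B \in H^1_\loc(\Omega)$. Given any open set $U$ with compact closure contained in $\Omega$, I would choose a test function $v$ on $\Omega$ with $v \equiv 1$ on $U$. By assumption $\one_B v \in H^1(\Omega)$, and since $\one_B v = \one_B$ on $U$, the restriction of $\one_B$ to $U$ lies in $H^1(U)$. As $U$ is arbitrary, $\one_B \in H^1_\loc(\Omega)$, and in particular its distributional partial derivatives $\partial_j \one_B$ are locally in $L^2$.

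Next I would exploit that $\one_B$ takes only the values $0$ and $1$. Since $\one_B$ is bounded and lies in $H^1_\loc(\Omega)$, and $H^1_\loc \cap L^\infty$ is stable under products with the usual product rule, the identity $\one_B = \one_B^2$ yields, for each $j$,
\[
	\partial_j \one_B = 2\, \one_B\, \partial_j \one_B \quad\text{a.e., hence}\quad (1 - 2\one_B)\, \partial_j \one_B = 0 \quad\text{a.e.}
\]
Because $1 - 2\one_B$ equals $\pm 1$ almost everywhere and so never vanishes, this forces $\partial_j \one_B = 0$ a.e.\ for every $j$. (Alternatively, one can invoke the standard fact that the gradient of an $H^1_\loc$-function vanishes a.e.\ on each of its level sets, applied to $\{\one_B = 0\}$ and $\{\one_B = 1\}$, which together exhaust $\Omega$.) Finally, an $H^1_\loc$-function with vanishing distributional gradient on a connected open set is a.e.\ equal to a constant -- this is precisely where connectedness of $\Omega$ enters -- so $\one_B$ equals some $c \in \{0,1\}$ almost everywhere, giving $\lambda(B) = 0$ when $c = 0$ and $\lambda(\Omega \setminus B) = 0$ when $c = 1$.

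The main obstacle is the rigorous justification of the two Sobolev-calculus facts invoked above: the product (or chain) rule for the merely bounded, non-smooth function $\one_B$, and the passage from a vanishing gradient to a \emph{global} constant. Both are classical but deserve care: the first rests on the algebra structure of $H^1_\loc \cap L^\infty$ (equivalently, on Stampacchia's theorem that $\nabla u = 0$ a.e.\ on level sets of $u$), while the second relies on the usual mollification argument, where local constancy upgrades to global constancy exactly because $\Omega$ is connected.
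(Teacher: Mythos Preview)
The paper does not actually prove this lemma; it is merely quoted from \cite[Lemma~11.1.1]{Arendt2005}, so there is no in-paper argument to compare against. Your proof is correct and is in fact the standard one: localising with a cutoff to obtain $\one_B\in H^1_{\loc}(\Omega)$, then using either the algebra identity $\one_B=\one_B^2$ together with the product rule in $H^1_{\loc}\cap L^\infty$, or Stampacchia's level-set lemma, to conclude $\nabla\one_B=0$ a.e., and finally invoking connectedness of $\Omega$ to deduce that $\one_B$ is a.e.\ constant. The two ingredients you flag as ``obstacles'' are exactly the right ones, and both are indeed classical; your argument needs no repair.
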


\begin{further_references}
	Form methods are an excellent tool for the study of heat equations on subsets of $\bbR^d$. For details we refer to the comprehensive monograph \cite{Ouhabaz2005}.
\end{further_references}

\section{Applications II}
\label{section:applications-ii}

\subsection{Schrödinger semigroups on $L^2(\bbR^d)$: asymptotic compactness}

It is well-known in mathematical physics that the essential spectrum of a Schrödinger operator $\Delta + m$ on $L^2(\bbR^d)$ (where $m$ denotes a potential) is closely related to the behaviour of $m(x)$ for large $\modulus{x}$. In the following example we demonstrate how Theorem~\ref{thm:asymp-compact-via-forms} provides one method to see this.

\begin{example}[Schrödinger semigroups on $L^2(\bbR^d)$]
	\label{ex:schroedinger-on-l_2-asymptotic-compactness}
	Let $H = L^2(\bbR^d)$ for dimension $d \ge 3$ and fix a function $m \in L^r_{\loc}(\bbR^d)$, where $r > \frac{d}{2}$. We assume that $m$ satisfies $\liminf_{\modulus{x} \to \infty} m(x) > 0$. Consider the subspace $V := \{u \in H^1(\bbR^d): \, \int_{\bbR^d} \modulus{m} u^2 < \infty\}$ of $H^1(\bbR^d)$. We note that $H^1(\bbR^d) \subseteq L^{\frac{2d}{d-2}}(\bbR^d)$, so $mu \in L^1_{\loc}(\bbR^d)$ for all $u \in V$. Define an operator $A: L^2(\bbR^d) \supseteq \dom(A) \to L^2(\bbR^d)$ by
	\begin{align*}
		\dom(A) & = \{u \in V: \, \Delta u - mu \in L^2(\bbR^d)\}, \\
		Au & = \Delta u - mu,
	\end{align*}
	where we use that $\Delta u - mu$ is a distribution for all $u \in V$.
	
	\emph{Claim:} The operator $A$ generates a positive and irreducible $C_0$-semigroup $S$ on $L^2(\bbR^d)$ which is asymptotically compact. Thus, if $\spb(A) = 0$, then $S(t)$ converges uniformly to $w \otimes w$ as $t \to \infty$, where $w \in L^2(\bbR)$ is a function of norm $1$ that is $>0$ almost everywhere.
\end{example}
\begin{proof}
	The space $V$ is a Hilbert space with respect to the norm $\norm{\argument}_V$ given by
	\begin{align*}
		\norm{u}_V^2 = \norm{u}_{H^1}^2 + \int_{\bbR^d} \modulus{m} u^2 = \int_{\bbR^d} u^2 + \int_{\bbR^d} \modulus{\nabla u}^2 + \int_{\bbR^d} \modulus{m} u^2.
	\end{align*}
	Note that $V$ is densely embedded in $L^2(\bbR^d)$. We now define a form $a: V \times V \to \bbR$ by
	\begin{align*}
		a(u,v) = \int_{\bbR^d} \nabla u \cdot \nabla v  +  \int_{\bbR^d} muv.
	\end{align*}
	Then $a$ is continuous. We show that~(a) the form $a$ is essentially positive-coercive, (b) its associated operator coincides with $A$ and~(c) the generated semigroup has the desired properties.
	
	\begin{enumerate}[(a)]
		\item We show that $a$ is essentially positive-coercive. To this end, let $u_n \rightharpoonup 0$ in $V$ and assume that $\limsup_{n \to \infty} a(u_n,u_n) \le 0$. We first note that this implies $u_n \rightharpoonup 0$ in $H^1(\bbR^d)$.
		
		Next, we observe that there exists $0 < \delta \le 1$ and a ball $B \subseteq \bbR^d$ such that $m(x) \ge \delta$ for $x$ in the complement  of $B$. We have
		\begin{align*}
			\norm{u_n}_V^2 = \int_{\bbR^d \setminus B} u_n^2 \, + \, \int_B u_n^2 \quad + \quad \int_{\bbR^d} \modulus{\nabla u_n}^2 \quad + \quad \int_{\bbR^d \setminus B} m u_n^2 \, + \, \int_{B} \modulus{m} u_n^2,
		\end{align*}
		so we have to prove that all five terms in the sum converge to $0$. Choose $r'$ conjugate to $r$, i.e., $\frac{1}{r} + \frac{1}{r'} = 1$. Then
		\begin{align*}
			\modulus{\int_B m u_n^2} \le \int_B \modulus{m} u_n^2 \le \norm{m}_{L^r(B)} \, \norm{u_n}_{L^{2r'}(B)}^2,
		\end{align*}
		and the latter sequence converges to $0$ since the embedding $H^1(B)  \hookrightarrow L^{2r'}(B)$ is compact (as $2r' < \frac{2d}{d-2}$). Consequently, our assumption $\limsup_{n \to \infty} a(u_n,u_n) \le 0$ implies that
		\begin{align*}
			\int_{\bbR^d} \modulus{\nabla u_n}^2 + \int_{\bbR^d \setminus B} m u_n^2 \to 0
		\end{align*}
		Since $m(x) \ge \delta$ for $x \in \bbR^d \setminus B$, it follows that also $\int_{\bbR^d \setminus B} u_n^2 \to 0$. The only remaining term to deal with is $\int_B u_n^2$; it converges to $0$, too, since the embedding $H^1(B) \hookrightarrow L^2(B)$ is compact. Consequently, we indeed have $\norm{u_n}_V \to 0$.
		
		\item By Theorem~\ref{thm:asymp-compact-via-forms} the form $(a,V)$ is closed. Let $\tilde A$ denote the semigroup generator associated with this form. We intend to show that $\tilde A = A$, and in order to do so, we first prove that the space $C_c^\infty(\bbR^d)$ of all test functions is dense in $V$. So let $u \in V$.
		
		\emph{Step 1:} There exists a sequence $(v_n) \subseteq C_c^\infty(\bbR^d)$ that converges to $u$ in $H^1(\bbR^d)$ and pointwise almost everywhere. Since $H^1(\bbR^d)$ is a sublattice of $L^2(\bbR^d)$ and the lattice operations are continuous, it follows that
		\begin{align*}
			u_n := \big( v_n \lor -\modulus{u} \big) \land \modulus{u}
		\end{align*}
		also converges to $u$ in $H^1(\bbR^d)$ and pointwise almost everywhere. The dominated convergence theorem thus shows that $u_n$ converges to $u$ in $V$.
		
		\emph{Step 2:} To conclude that the test functions are dense in $V$, it remains to show that each function $u_n$ can itself be approximated by test functions. So fix an index $n$ and set $w := u_n$. There exists a ball $B \subseteq \bbR^d$ with center $0$ such that $w$ vanishes outside of $\frac{1}{2}B$. We choose a sequence of mollifiers $(\rho_k)$ and define $w_k := \rho_k \star w$ for each index $k$. For all sufficiently large $k$ we have $w_k \in C_c^\infty(B)$, and the sequence $(w_k)$ converges to $w$ in $H^1(B)$; consequently, $w_k \to w$ in $L^{\frac{2d}{d-2}}(B)$. Using H\"older's inequality, we now obtain
		\begin{align*}
			\int_B \modulus{w_k-w}^2 \modulus{m} \le \norm{w_k-w}_{L^{\frac{2d}{d-2}}(B)}^2 \norm{m}_{L^{\frac{d}{2}}(B)} \to 0.
		\end{align*}
		Hence, $w_k \to w$ in $V$. Therefore, the test functions are indeed dense in $V$.
		
		\emph{Step 3:} Now we can show that $\tilde A = A$. Indeed, let $u \in \dom(\tilde A)$ and set $f := \tilde A u$. For all $v \in V$ we then have
		\begin{align}
			\label{eq:schroedinger-equality-of-operators}
			\int_{\bbR^d} \nabla u \cdot \nabla v + \int_{\bbR^d} muv = - \int_{\bbR^d} fv.
		\end{align}
		For each test function $v$ we have
		\begin{align*}
			\langle \Delta u, v\rangle = - \int_{\bbR^d} \nabla u \cdot \nabla v = \int_{\bbR^d} (mu + f)v,
		\end{align*}
		so $u \in \dom(A)$ and $Au = f$. Now assume conversely that $u \in \dom(A)$ and set $f := Au$. Then~\eqref{eq:schroedinger-equality-of-operators} holds for all $v \in C_c^\infty(\bbR^d)$. Since we have shown above that the test functions are dense in $V$, we conclude that~\eqref{eq:schroedinger-equality-of-operators} even holds for all $v \in V$. Thus, $u \in \dom(\tilde A)$ and $\tilde A u = Au$.
		
		We have shown that $\tilde A = A$.
		
		\item Since the operator $A$ equals $\tilde A$, it generates a norm-continuous (even holomorphic) $C_0$-semigroup $S$ on $L^2(\bbR^d)$. This semigroup is asymptotically compact due to Theorem~\ref{thm:asymp-compact-via-forms}. It thus remains to show that $S$ is positive and irreducible. To this end we will use Proposition~\ref{prop:pos-and-irred-by-forms}.
		
		So let $u \in V$. Then $u^+ \in H^1(\bbR^d)$. Since $(u^+)^2 \le u^2$, one has $u^+ \in V$. Moreover, since $u^+u^- = 0$ and, by Stampacchia's lemma, $\partial_j u^+ \partial_j u^- = 0$, it follows that $a(u^+,u^-) = 0$. Hence, $S$ is positive by Proposition~\ref{prop:pos-and-irred-by-forms}(a).
		
		To show irreducibility, let $B \subseteq \bbR^d$ be a Borel set such that both $B$ and $\bbR^d \setminus B$ have strictly positive Lebesgue measure. According to Lemma~\ref{lem:sobolev-space-no-band-projections} there exists a test function $v \in C_c^\infty(\bbR^d)$ such that $\one_B v \not\in H^1(\bbR^d)$. Thus, $v \in V$ but $\one_B v \not\in V$, so irreducibility follows from Proposition~\ref{prop:pos-and-irred-by-forms}(b).
	\end{enumerate}
	Convergence of the semigroup in case that $\spb(A) = 0$ now follows from Corollary~\ref{cor:unif-conv-c_0}, and the fact that the limit operator is symmetric follows since $A$ is self-adjoint (as the form $a$ is symmetric).
\end{proof}

\begin{further_references}
	For more information about the closely related question where the essential spectrum of a Schr\"odinger operator is located, we refer to the literature in mathematical physics, for instance to the classical reference \cite[Chapter~4]{Reed1978}.
\end{further_references}

\subsection{A Schrödinger semigroup on $L^2(\bbR)$: strong convergence}

The semigroup on $L^2(\bbR^d)$ generated by the Laplace operator $\Delta$ without potential converges strongly to $0$ as $t \to \infty$, while we have operator norm convergence to a non-zero equilibrium in Example~\ref{ex:schroedinger-on-l_2-asymptotic-compactness} (if $\spb(A) = 0$). The following example, which we owe to Mateusz Kwa\'snicki \cite{Kwasnicki2019}, shows that strong convergence to a non-zero equilibrium can also occur.

\begin{example}
	\label{ex:schroedinger-on-l_2-strong-convergence}
	Define $m \in L^\infty(\bbR)$ by $m(x) = \frac{6x^2 - 2}{(1+x^2)^2}$ for $x \in \bbR$, and consider the Schr\"odinger operator $A: L^2(\bbR) \supseteq H^2(\bbR) \to L^2(\bbR)$ that is given by $Au = \Delta u - mu$. Then $A$ is self-adjoint with finite spectral bound and thus generates an eventually norm continuous (even holomorphic) $C_0$-semigroup $S$ on $L^2(\bbR)$. It follows from perturbation theory for positive semigroups that $S$ is positive and irreducible (see for instance \cite[Proposition~C-III-3.3]{Arendt1986}). A direct computation shows that the function $w \in L^2(\bbR)$ given by
	\begin{align*}
		w(x) = \frac{1}{1+x^2} \quad \text{for all } x \in \bbR
	\end{align*}
	is in the kernel of $A$. Next we note that $\spb(A) = 0$: since $m(x) \to 0$ as $\modulus{x} \to \infty$, it follows that the essential spectrum of $A$ coincides with $(-\infty,0]$ (see for instance \cite[Corollary~2 on p.\,113 and Example~8 on p.\,118]{Reed1978}; this is also related to Example~\ref{ex:schroedinger-on-l_2-asymptotic-compactness} above). Hence, if $\spb(A) > 0$, then $\spb(A)$ is an isolated eigenvalue of $A$; the positivity of $S$ implies that there is a corresponding eigenvector $v > 0$, and the irreducibility thus yields that $v \gg 0$ \cite[Proposition~C-III-3.5(a)]{Arendt1986}. Consequently,
	\begin{align*}
		\langle v, w \rangle = \langle v, S(t)w \rangle = \langle S(t)v, w \rangle = e^{t\spb(A)} \langle v,w \rangle
	\end{align*}
	for each time $t \in (0,\infty)$. This is a contradiction since $\langle v, w\rangle \not = 0$. Hence, $\spb(A) = 0$.
	
	We can thus conclude from Corollary~\ref{cor:cyclic-ablv-ocn} (or from Theorem~\ref{thm:ggg}) that $S(t)$ converges strongly to a multiple of $w \otimes w$ as $t \to \infty$.
\end{example}

The convergence of the semigroup $S$ in the above example can, of course, also be shown by simpler methods than Corollary~\ref{cor:cyclic-ablv-ocn} or Theorem~\ref{thm:ggg} -- for instance by employing the spectral theorem for self-adjoint operators. Yet, we chose to include this simple example since it gives a nice contrast to Example~\ref{ex:schroedinger-on-l_2-asymptotic-compactness} and since it is a good illustration of the use of irreducibility (to conclude that there are no eigenvalues in $(0,\infty)$).

\section{Eventual positivity}
\label{section:ev-pos}

While positivity of semigroups is a ubiquitous phenomenon in analysis, it might come as a surprise at first glance that some evolution equations exhibit only \emph{eventually positive} rather than positive behaviour. We say that a semigroup $S$ on our Banach lattice $E$ is \emph{eventually positive} if there exists a time $t_0 > 0$ such that $S(t) \ge 0$ for all $t \ge t_0$; this does not assume (nor does it imply) that $S(t)$ is positive for small times. In fact, the property that we just described should more precisely be called \emph{uniform} eventual positivity, since there exists a single time from which on all operators are positive -- in contrast to the situation where the single orbits become eventually positive for positive initial value, but where the time when this happens depends on the initial value (see \cite[Example~5.7]{Daners2016} for an example which shows that this can indeed occur).

Here is a simple criterion for eventual positivity of self-adjoint semigroups.

\begin{theorem}
	\label{thm:ev-pos}
	Let $S$ be a self-adjoint $C_0$-semigroup on $L^2 := L^2(\Omega,\mu)$ for a $\sigma$-finite measure space $(\Omega,\mu)$. Assume that there exists a quasi-interior point $u$ in $L^2$ with the following two properties:
	\begin{enumerate}[(1)]
		\item There exists a time $t_0$ such that, for each $f \in L^2$, the modulus $\modulus{S(t_0)f}$ is dominated by an ($f$-dependent) multiple of $u$.
		
		\item The largest spectral value of the generator $A$ is an eigenvalue whose eigenspace is spanned by a function $w$ that satisfies $w \ge c u$ for a number $c > 0$.
	\end{enumerate}
	Then there exists $t_1 > 0$ such that $S(t) \ge 0$ for all $t \ge t_1$. In fact, $t_1$ can be chosen such that, for each $0 < f \in L^2(\Omega,\mu)$ and each $t \ge t_1$, the function $S(t)f$ even dominates a strictly positive multiple of $u$.
\end{theorem}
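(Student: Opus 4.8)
The plan is to reduce everything to the behaviour of the spectral projection onto the top eigenvalue and to exploit the smoothing hypothesis~(1) by transporting the problem into the principal ideal generated by $u$. First I would normalise: replacing $S(t)$ by $e^{-\spb(A)t}S(t)$ (which affects neither positivity nor the comparison with $u$) I may assume $\spb(A)=0$, so that $\spec(A)\subseteq(-\infty,0]$ and $w\in\ker A$ with $S(t)w=w$ for all $t$. Since $A$ is self-adjoint, the spectral theorem yields $S(t)=\int_{(-\infty,0]}e^{t\lambda}\,dE(\lambda)$, whence $S(t)f\to Pf$ in $L^2$ for every $f$, where $P=E(\{0\})$ is the orthogonal projection onto $\ker A=\linSpan\{w\}$; explicitly $Pf=\frac{\langle w,f\rangle}{\langle w,w\rangle}\,w$. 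Because $w\ge cu>0$ almost everywhere, $\langle w,f\rangle>0$ for every $0<f$, so $P$ maps each such $f$ to a strictly positive multiple of $w\ge cu$.

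Next I would set up the ideal $E_u:=\{g:\ \modulus{g}\le\rho u\text{ for some }\rho\ge0\}$ with the gauge norm $\norm{g}_u:=\inf\{\rho:\modulus{g}\le\rho u\}$; this is an AM-space with order unit $u$ and embeds continuously into $L^2$. Hypothesis~(1) says exactly that $S(t_0)$ maps $L^2$ into $E_u$, and a closed-graph argument upgrades this to boundedness $S(t_0)\colon L^2\to E_u$. In particular $w=S(t_0)w\in E_u$, so $cu\le w\le\norm{w}_u\,u$: the eigenfunction is two-sidedly comparable to $u$. Dualising and using self-adjointness, $S(t_0)$ also acts boundedly $E_u'\to L^2$, where the dual AL-space $E_u'$ contains $L^2$ (so $L^2_+\subseteq(E_u')_+$); composing, $S(2t_0)\colon E_u'\to E_u$ is bounded. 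The gain from passing to $E_u'$ is that for $\psi\in(E_u')_+$ one has $\norm{\psi}_{E_u'}=\langle\psi,u\rangle$, so the limit projection acquires a \emph{uniform} positivity margin: $P\psi=\frac{\langle\psi,w\rangle}{\langle w,w\rangle}\,w\ge\frac{c^2}{\langle w,w\rangle}\,\norm{\psi}_{E_u'}\,u$ for every $\psi\ge0$.

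The decisive step is to show that $S(t)\to P$ not merely strongly but in the operator norm of $\calL(E_u',E_u)$. Using $S(t_0)PS(t_0)=P$ (which holds because $S(t_0)w=w$), one has for $t\ge 2t_0$ the factorisation $S(t)-P=S(t_0)\,[S(t-2t_0)-P]\,S(t_0)=S(t_0)\,S(t-2t_0)(\id-P)\,S(t_0)$, so the task reduces to controlling the middle factor $S(s)(\id-P)$ on $L^2$ as $s\to\infty$. Here the inner map $S(t_0)\colon E_u'\to L^2$ should send the unit ball of $E_u'$ into a set on which the spectral-theorem decay of $S(s)(\id-P)$ is uniform, thereby converting the pointwise (strong) convergence into operator-norm decay $\eta(t):=\norm{S(t)-P}_{\calL(E_u',E_u)}\to0$.

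Finally I would cash this in: for $0<f\in L^2\subseteq E_u'$ and large $t$,
\begin{align*}
	S(t)f=Pf+(S(t)-P)f\ge\frac{c^2}{\langle w,w\rangle}\norm{f}_{E_u'}u-\eta(t)\norm{f}_{E_u'}u=\Big(\tfrac{c^2}{\langle w,w\rangle}-\eta(t)\Big)\norm{f}_{E_u'}u,
\end{align*}
so once $t\ge t_1$ with $\eta(t_1)<c^2/\langle w,w\rangle$ we get $S(t)f\ge\varepsilon_f u>0$ with $\varepsilon_f>0$ depending only on $\norm{f}_{E_u'}$; this delivers simultaneously $S(t)\ge0$ for all $t\ge t_1$ and the sharper domination claim. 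The main obstacle is precisely the operator-norm convergence of the third paragraph: the embedding $E_u\hookrightarrow L^2$ is not compact on non-atomic spaces, so strong convergence alone is genuinely insufficient (a rank-one strongly positive limit can be destroyed by arbitrarily small perturbations measured in the wrong norm). One must therefore either invoke a spectral gap below $0$, or the compactness of the smoothing operator $S(t_0)$ — which in the intended Schrödinger applications holds because $S(t_0)$ has an $L^2$-kernel and is thus Hilbert--Schmidt.
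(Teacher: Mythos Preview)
The paper does not give a proof; it simply cites \cite[Corollary~3.5]{DanersUnif}. Your approach via the principal ideal $E_u$, the factorisation $S(t)-P=S(t_0)\bigl[S(t-2t_0)(\id-P)\bigr]S(t_0)$, and the final pointwise estimate is exactly the strategy of that reference, so the overall architecture is correct.

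The obstacle you flag at the end, however, is not a genuine gap but a missed consequence of hypothesis~(1). You already established via the closed graph theorem that $\modulus{S(t_0)f}\le C\norm{f}_{L^2}\,u$ for all $f$. Since $u\in L^2$, this says that for almost every $\omega$ the evaluation $f\mapsto (S(t_0)f)(\omega)$ is a bounded linear functional on $L^2$ of norm at most $Cu(\omega)$; writing it as $\langle k_\omega,\argument\rangle$ with $\norm{k_\omega}_{L^2}\le Cu(\omega)$ and applying Parseval to an orthonormal basis gives
\[
\norm{S(t_0)}_{HS}^2=\sum_n\norm{S(t_0)e_n}_{L^2}^2=\int_\Omega\sum_n\modulus{\langle k_\omega,e_n\rangle}^2\dx\mu(\omega)=\int_\Omega\norm{k_\omega}_{L^2}^2\dx\mu(\omega)\le C^2\norm{u}_{L^2}^2<\infty.
\]
Thus $S(t_0)$ is Hilbert--Schmidt purely from the abstract assumption~(1), not merely in the Schr\"odinger examples. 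Compactness of $S(t_0)$ then forces the spectrum of $A$ to be discrete, so $0$ is isolated, $\norm{S(s)(\id-P)}_{\calL(L^2)}\to 0$, and your factorisation yields $\eta(t)\to 0$ as required. With this observation your argument is complete and self-contained.
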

\begin{proof}
	This was proved in \cite[Corollary~3.5]{DanersUnif}.
\end{proof}

The property that, for all sufficiently large times $t$ and all non-zero $f \ge 0$, the function $S(t)f$ is not only positive, but dominates a non-zero multiple of $u$, can be interpreted as a rather strong irreducibility property.

\begin{further_references}
	\begin{enumerate}[(a)]
		\item Eventually positive behaviour in infinite dimensions has already been observed in 2008 in \cite{Ferrero2008, Gazzola2008} for the biharmonic heat equation on $\bbR^d$ (compare also the recent article \cite{Ferreira2019}). A few years later, a case study for the Dirichlet-to-Neumann semigroup on the unit circle was presented in \cite{Daners2014}, which then set off the development of a general theory.
		
		\item The present state of the art for eventually positive semigroups can be found in the articles \cite{Daners2016, Daners2016a} where the basic theory is presented, and in \cite{Daners2017, DanersPERT, DanersUnif} where various aspects of the theory are further developed.
		
		\item In finite dimensions, eventual positivity of matrices and matrix semigroups has been considered several years earlier than in infinite dimensions; we refer for instance to \cite{Tarazaga2001, Noutsos2006, Noutsos2008} (which are only three examples out of a wealth of articles in this field). 
		
		\item A closely related topic concerns \emph{eventual domination} of semigroups, which is treated in \cite{GlueckEvDom}.
	\end{enumerate}
\end{further_references}

\section{Applications III}
\label{section:applications-iii}

\subsection{A Laplace operator with non-local boundary conditions on $L^2(0,1)$}

A very simple example where eventual positivity occurs is the Laplace operator on $L^2(0,1)$ with a special type of non-local boundary conditions. In contrast to Example~\ref{ex:nonlocal-boundary-conditions} we do not couple the boundary behaviour with the behaviour in the interior, now. Instead, we impose a coupling between the boundary conditions at the two endpoints of the interval $(0,1)$.

\begin{example} \label{ex:ev-pos-non-local-laplace}
	Let $S$ denote the semigroup on $L^2(0,1)$ that is associated with the form $a: H^1(0,1) \times H^1(0,1) \to \bbR$ given by 
	\begin{align*}
		a(u,v) = \int u' v' +
		\begin{pmatrix}
			u(0) & u(1)
		\end{pmatrix}
		\begin{pmatrix}
			1 & 1 \\
			1 & 1
		\end{pmatrix}
		\begin{pmatrix}
			v(0) \\ v(1)
		\end{pmatrix}
	\end{align*}
	for $u \in H^1$. The generator of $S$ is the Laplace operator $\Delta$ with domain
	\begin{align*}
		\dom(\Delta) = \{u \in H^2(0,1): \, u'(0) = -u'(1) = u(0) + u(1) \}.
	\end{align*}
	The semigroup $S$ satisfies the assumptions of Theorem~\ref{thm:ev-pos} for the function $u = \one$; indeed, the first assumption follows from the fact that $S(t)$ maps $L^2(0,1)$ into the form domain $H^1(0,1)$ for each time $t$, and the second assumption can be shown by explicitly computing the resolvent of $\Delta$ at the point $0$ and by using a relation between the resolvent behaviour and the eigenfunction; we refer to \cite[Theorem~6.11]{Daners2016a} and \cite[Theorem~4.2]{DanersUnif} for details.
	
	Hence, we have $S(t) \ge 0$ for all sufficiently large times $t$.
\end{example}

The fact that the semigroup in the above example is not positive was first observed by Khalid Akhlil (private communication); he also gave a detailed treatment of the semigroup in \cite[Section~3]{Akhlil2018}. The semigroup appeared as an example of individual eventual positivity in \cite[Theorem~6.11]{Daners2016a} and of uniform eventual positivity in \cite[Theorem~4.2]{DanersUnif}.

\subsection{Schrödinger systems}

Plenty of further examples for eventual positivity can be found in \cite[Section~6]{Daners2016}, \cite[Section~6]{Daners2016a} and \cite[Chapter~11]{GlueckDISS}. In the following we give a further example which has not appeared in the literature, yet.

\begin{example}
	\label{ex:ev-pos-schroedinger-systems}
	Let $\emptyset \not= \Omega \subseteq \bbR^d$ be open, bounded and connected and assume that it has Lipschitz boundary. Let $V: \Omega \to \bbR^{N \times N}$ (for a fixed $N \in \bbN$) be a bounded and measurable mapping and assume that each matrix $V(x)$ is self-adjoint, negatively semi-definite and satisfies the conditions $\spec(V(x)) \cap i \bbR = \{0\}$ and $\ker V(x) = \linSpan \{c\}$, where $c \in \bbR^N$ is a vector that does not depend on $x$ and whose entries are all strictly positive. On the vector-valued Hilbert space $H = L^2(\Omega,\bbR^N)$ we consider the form $a: H^1(\Omega;\bbR^N) \times H^1(\Omega; \bbR^N) \to \bbR$ given by
	\begin{align*}
		a(u,v) = \sum_{k=1}^N \int_\Omega \nabla u_k \cdot \nabla v_k - \int_\Omega \langle Vu, v\rangle_{\bbR^N}
	\end{align*}
	for $u,v \in H^1(\Omega; \bbR^N)$. The associated semigroup $S$ on $H$ is contractive and self-adjoint and maps $H$ into $L^\infty(\Omega,\bbR^N)$ by an ultra contractivity argument. The largest eigenvalue of the generator $A$ is $0$, and the corresponding eigenspace is spanned by the vector $w = c \one$ (see \cite[Propositions~2.9 and~2.10]{Dobrick2020}). Hence, the assumptions of Theorem~\ref{thm:ev-pos} are satisfied for $u = (\one,\dots, \one)$. 
	
	Consequently, there exists a time $t_1 \ge 0$ such that $S(t) \ge 0$ for each $t \ge t_1$. However, the semigroup $S$ cannot be expected to be positive unless the off-diagonal entries of the matrices $V(x)$ are $\ge 0$.
\end{example}

\begin{further_references}
	The semigroup generator in Example~\ref{ex:ev-pos-schroedinger-systems} is a perturbation of the generator of a positive semigroup. In this context, it is worthwhile to mention that the relation between eventual positivity and perturbations is rather subtle; for the single operator case in finite dimensions this was observed in \cite{Shakeri2017}, and for $C_0$-semigroups in infinite dimensions a study of this topic can be found in \cite{DanersPERT}.
\end{further_references}

% bibliography:

\end{document}